\renewcommand{\P}{\mathcal{P}}
\newcommand{\Q}{\mathcal{Q}}
\newcommand{\1}{\mathbf{1}}
\newcommand{\X}{\mathbf{X}}
\newcommand{\E}{\mathbf{E}}
\newcommand{\N}{\mathbf{N}}
\newcommand{\one}{\mathbf{1}}
\newcommand{\R}{\mathbf{R}}
\newcommand{\PP}{\mathbf{P}}
\newcommand{\ccdot}{\,\cdot\,}
\newtheorem{assumption}{Assumption}
\newtheorem{theorem}{Theorem}
\newtheorem{remark}[theorem]{Remark}
\newtheorem{lemma}[theorem]{Lemma}
\DeclareMathOperator{\cov}{cov}
\newcommand{\be}{\begin{equs}}
\newcommand{\ee}{\end{equs}}
\newtheorem{proposition}[theorem]{Proposition}
\title{Coupling and Decoupling to bound an approximating Markov Chain}
\author[James E. Johndrow]{James E. Johndrow$^1$}
\thanks{$^1$Department  of Statistics, Stanford University, Stanford
  CA, 94305.}
\author[Jonathan C. Mattingly]{Jonathan C. Mattingly$^2$}
\thanks{$^2$Departments of Mathematics and Statistical Science, Duke University, Durham NC, 27701.}
\date{\today}
\begin{document}
\maketitle

This simple note lays out a few observations which are well known in
many ways but may not have been said in quite this way before. The
basic idea is that when comparing two different Markov chains it is
useful to couple them is such a way that they agree as often as
possible. We construct such a coupling and analyze it by
a simple dominating chain which registers if the two processes agree
or disagree. We find that this imagery is useful when thinking about
such problems. We are particularly interested in comparing the invariant measures
and long time averages of the processes. However, since the paths
agree for long runs, it also provides estimates on various stopping
times such as hitting or exit times.

This work builds on the general ideas of Maximal couplings. See for
instance \cite{MR1198659,MR1924231,MR1741181}. The analysis uses ideas from
Poisson equations and Martingale methods which we find
convient. These ideas are quite standard (see for instance
\cite{MR758799,MR1287609}) but we are more directly inspired by
\cite{Glynn_Meyn_1996,Kontoyiannis_Meyn_2011,MR2669996}.  As this work was heading to completion,
the authors became aware of the recent paper
\cite{2017arXiv170203917E} which also looks at coupling/decoupling of
two different Markov process for an interesting, but different goal.
Error bounds for approximations of uniformly mixing of Markov chains 
have been derived using other techniques by \cite{mitrophanov2005sensitivity},
among others.

In Section~\ref{Sec:Basic}, we give our basic setup  and quote some
simple,  classical convergence results. Proofs of some of the results
are given in the Appendix for completeness. In
Section~\ref{sec:nearby}, we introduce  a nearby Markov chain
and state the main approximation results of the note. 
In Section~\ref{sec:underst-thro-coupl}, we state the existence of a
coupling leading to certain desirable estimates. 
In Section~\ref{sec:usingCoup}, we show how the results of
Section~\ref{sec:nearby} can be obtained using  the  coupling from
Section~\ref{sec:underst-thro-coupl}. In
Section~\ref{sec:buildingTheCoupling}, we build the coupling on which
all results rest and introduce a two state change measure when the
chains are coupled or decoupled. In
Sections~\ref{sec:Analysis1} and \ref{sec:decoouling-time}, we analyse
the bounding chain. In Section~\ref{sec:Sharp}, we show certain
bounds are sharp. In Section~\ref{sec:MCMC}, we provide a simple
application to a Markov Chain Monte Carlo algorithm and show
numerically that the results of the paper show a good level of
approximation at considerable speed up by using an approximating chain
rather than the original sampling chain.

\vspace{1em}
\noindent{\bf Acknowledgements:}  This note grew of out of various
internal notes
prompted by  collaborations with Andrew Stuart, Mauro Maggioni, David Dunson, and Sayan
Mukherjee. We thank them for encouragement and useful discussions. We
are both indebted  to MSRI where during the Fall of 2015 we had the
time to largely write this version. We also thank the NSF for its suport
through grant DMS-1546130

\section{Basic ergodic statements}\label{Sec:Basic}
Let $\P$ be a Markov transition kernel on a Polish space $\X$ with
metric $|\ccdot|$. Given a function $\phi\colon \X \rightarrow \R$ and
probability measure $\nu$ on $\X$ we define:
\begin{align*}
  \P\phi(x) = \int_\X \phi(y)\P(x,dy)\,,\quad \nu \P(dy) = \int_\X
  P(x,dy)\nu(dx)\,,\quad \nu \phi = \int_\X \phi(y) \nu(dy)\,.
\end{align*}

The following assumption is a version of a Doeblin Condition.
\begin{assumption}\label{a:doeblin}
  There exists a constant $a\in (0,1)$ so that
  \begin{align*}
  \|  \P(x,\ccdot)-\P(y,\ccdot)\|_{TV} \leq 1-a 
  \end{align*}
for all $x,y \in \X$.
\end{assumption}

A standard result of such a Doeblin Condition is the following.
\begin{theorem}\label{thm:basicE}
  Under Assumption \ref{a:doeblin}, there exists a unique stationary
  measure $\mu$ for $\P$. Furthermore for any initial probability
  measures $\nu_1$ and $\nu_2$ one has
  \begin{align*}
    \|\nu_1 \P^n - \nu_2\P^n \|_{TV} \leq (1-a)^n \|\nu_1 - \nu_2\|_{TV}
  \end{align*}
\end{theorem}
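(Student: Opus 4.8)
The plan is to derive the whole statement from a single one-step inequality: the contraction
\[
\|\nu_1\P-\nu_2\P\|_{TV}\le(1-a)\,\|\nu_1-\nu_2\|_{TV}\quad\text{for all probability measures }\nu_1,\nu_2,
\]
which I would then simply iterate. To prove this one-step bound I would write the signed measure $\eta=\nu_1-\nu_2$ in its Hahn--Jordan form $\eta=\eta_+-\eta_-$. Since $\eta(\X)=0$ the two parts have a common total mass $d$, and with the normalization $\|\mu-\nu\|_{TV}=\sup_A|\mu(A)-\nu(A)|$ one checks that $d=\|\nu_1-\nu_2\|_{TV}$. If $d=0$ the measures coincide and there is nothing to prove; otherwise $\bar\eta_\pm:=\eta_\pm/d$ are probability measures, and for every measurable $A$
\[
(\nu_1\P-\nu_2\P)(A)=d\int_\X\!\int_\X\bigl(\P(x,A)-\P(y,A)\bigr)\,\bar\eta_+(dx)\,\bar\eta_-(dy).
\]
Bounding the integrand by $|\P(x,A)-\P(y,A)|\le\|\P(x,\ccdot)-\P(y,\ccdot)\|_{TV}\le 1-a$ via Assumption~\ref{a:doeblin}, and then taking the supremum over $A$, gives the one-step bound.

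Iterating it $n$ times yields at once the displayed inequality $\|\nu_1\P^n-\nu_2\P^n\|_{TV}\le(1-a)^n\|\nu_1-\nu_2\|_{TV}$. For the stationary measure I would appeal to the Banach fixed point theorem: the set of probability measures on $\X$ is complete for the total variation metric, and by the one-step bound the map $\nu\mapsto\nu\P$ is a contraction of it with factor $1-a<1$, hence has a unique fixed point $\mu$, which is by definition the unique stationary measure of $\P$. (One can equally argue by hand: uniqueness because two stationary measures obey $\|\mu_1-\mu_2\|_{TV}\le(1-a)^n\|\mu_1-\mu_2\|_{TV}\to 0$, and existence because $\delta_{x_0}\P^n$ is then Cauchy in total variation, with a limit that the one-step bound forces to be $\P$-invariant.)

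The only step carrying real content is the one-step contraction, and that is where I would take care: one has to fix the normalization of $\|\ccdot\|_{TV}$ so that the Doeblin constant $1-a$ and the contraction rate coincide---hence the explicit choice above---and one has to justify moving the total variation norm through the average against $\bar\eta_+\otimes\bar\eta_-$, which is just convexity of the norm but is the point worth spelling out. In keeping with the theme of this note, there is an equivalent coupling route to the one-step bound: Assumption~\ref{a:doeblin} says $\P(x,\ccdot)$ and $\P(y,\ccdot)$ admit a coupling under which they agree with probability at least $a$, and averaging such maximal couplings over $\bar\eta_+\otimes\bar\eta_-$ reproduces the estimate. I would record that remark but carry out the short analytic argument above, since Theorem~\ref{thm:basicE} enters the sequel only as a black box; a self-contained proof can be placed in the Appendix.
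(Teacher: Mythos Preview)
Your proposal is correct and is the standard contraction-mapping argument for Doeblin-type conditions. Note that the paper itself does not supply a proof of Theorem~\ref{thm:basicE}; it simply records it as ``a standard result of such a Doeblin Condition'' and moves on, so there is no paper proof to compare against---your argument would serve well as the appendix proof the paper omits, and your care about the normalization of $\|\ccdot\|_{TV}$ matches the paper's convention $\|\nu_1-\nu_2\|_{TV}=\sup_A|\nu_1(A)-\nu_2(A)|=\inf\PP(X_1\neq X_2)$.
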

Where for two probability measures $\nu_1$ and $\nu_2$, the Total
Variation distance is defined by
\begin{align*}
  \|\nu_1 - \nu_2\|_{TV} = \frac12 \Big(\sup_{|f|_\infty \leq 1} \nu_1 f -
  \nu_2 f \Big)= \inf_{(X_1,X_2)} \PP(X_1 \neq X_2)
\end{align*}
where $|f|_\infty=\sup_x|f(x)|$ and  the infimum is over all couplings of $\nu_1$ and $\nu_2$. In other 
words, $(X_1,X_2)$ are any random variables constructed on the same 
space with $\textrm{Law}(X_i)=\nu_i$.
It is equally straightforward to prove the law of large numbers and
concentration results. Defining 
\begin{align}\label{fstar}
  |f|_*= \inf_{\lambda \in \R} \Big(\sup_{x \in \X}|f(x) -\lambda|\Big)
\end{align}
as in \cite{MR2857021},
we have the following results whose proofs for completeness are given in Appendix~\ref{sec:proofs-doebl-results}.
\begin{theorem} For any bounded $f\colon \R \rightarrow \X$,  we have
  that
  \begin{align*}
    \E\Big (\frac1{n} \sum_{k=0}^{n-1} f(X_k) -\mu f\Big)^2 \leq
     \frac{4 |f|_*^2}{a^2n}\big( 2+\frac{8}{n}\big) 
  \end{align*}
and for any $\lambda >0$
\begin{align*}
    \PP \Big(\big|  \mu f - \frac1n \sum_{k=0}^{n-1} f(X_k) \big| \geq
  \frac{4}{n a} |f|_*+ \frac{\lambda}{\sqrt{n}}|f|_*    \Big)
  \leq 2\exp\big(- \frac{a^2 \lambda^2}{32}\big)
\end{align*}
\end{theorem}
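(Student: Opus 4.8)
The plan is to use the standard Poisson-equation-plus-martingale approach. First I would solve the Poisson equation: let $\hat f = f - \mu f$ and set $g = \sum_{k=0}^\infty \P^k \hat f$. By Theorem~\ref{thm:basicE} applied with $\nu_1 = \delta_x$ and $\nu_2 = \mu$, one has $|\P^k \hat f(x)| = |\P^k f(x) - \mu f| \le 2|f|_* (1-a)^k$ (the factor $|f|_*$ rather than $|f|_\infty$ comes from the fact that $\P^k(f-\lambda) = \P^k f - \lambda$ for any constant $\lambda$, so we may center $f$ optimally before applying the contraction), so the series converges and $|g|_\infty \le \tfrac{2}{a}|f|_*$, and similarly $|g|_* \le \tfrac{2}{a}|f|_*$. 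This $g$ satisfies $g - \P g = \hat f$.

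Next I would form the telescoping identity. Writing $M_n = g(X_n) - g(X_0) - \sum_{k=0}^{n-1}(\P g(X_k) - g(X_k)) = g(X_n) - g(X_0) + \sum_{k=0}^{n-1}\hat f(X_k)$, the sequence $M_n$ is a martingale with respect to the natural filtration, with increments $\xi_{k+1} = g(X_{k+1}) - \P g(X_k)$. Hence
\begin{align*}
  \frac1n\sum_{k=0}^{n-1} f(X_k) - \mu f = \frac1n\Big( M_n + g(X_0) - g(X_n)\Big),
\end{align*}
and the correction term $g(X_0) - g(X_n)$ is bounded in absolute value by $2|g|_* \le \tfrac{4}{a}|f|_*$, which explains the $\tfrac{4}{na}|f|_*$ shift appearing in the concentration bound. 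For the $L^2$ bound I would expand $\E M_n^2 = \sum_{k=0}^{n-1}\E \xi_{k+1}^2 \le n\,(2|g|_*)^2$ using martingale orthogonality of increments (each increment is bounded by $2|g|_*$ in sup norm after centering), then combine via $(\tfrac1n\sum f(X_k) - \mu f)^2 \le \tfrac{2}{n^2}M_n^2 + \tfrac{2}{n^2}(g(X_0)-g(X_n))^2$ and take expectations; tracking constants gives the stated $\tfrac{4|f|_*^2}{a^2 n}(2 + \tfrac8n)$. For the concentration statement I would apply the Azuma--Hoeffding inequality to $M_n$: its increments are bounded in absolute value by $2|g|_* \le \tfrac{4}{a}|f|_*$, so $\PP(|M_n| \ge t) \le 2\exp(-t^2/(2n(2|g|_*)^2)) \le 2\exp(-a^2 t^2/(32 n |f|_*^2))$; choosing $t = \lambda\sqrt n\,|f|_*$ and absorbing the deterministic $g(X_0)-g(X_n)$ term into the $\tfrac{4}{na}|f|_*$ offset yields the claim with constant $32$ in the exponent.

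The main obstacle is purely bookkeeping: getting the constants to line up exactly as stated, since one must be careful about (i) whether the martingale increment bound is $2|g|_*$ or $|g|_\infty$ (using $|\cdot|_*$ is what keeps the dependence on $f$ through $|f|_*$ rather than $|f|_\infty$, and requires recentering $g$ at each step), (ii) the factor-of-two losses from the crude bound $(a+b)^2 \le 2a^2 + 2b^2$, and (iii) the version of Azuma's inequality used. The conceptual content — Poisson equation, martingale decomposition, orthogonality for $L^2$, Azuma for the tail — is entirely standard; I would cite \cite{Glynn_Meyn_1996} or \cite{MR2857021} for the template and simply verify the constants.
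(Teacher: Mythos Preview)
Your proposal is correct and follows essentially the same route as the paper: solve the Poisson equation by the geometric series $\psi=\sum_{k\ge 0}\P^k(f-\mu f)$ with $|\psi|_\infty\le 2|f|_*/a$, rewrite the ergodic average as $(\psi(X_n)-\psi(X_0))/n - M_n/n$ for a bounded-increment martingale, then use $(a+b)^2\le 2a^2+2b^2$ for the $L^2$ estimate and Azuma--Hoeffding for the tail. The only minor difference is that the paper bounds each increment variance via the projection inequality $\E(\psi(X_k)-\E[\psi(X_k)\mid\mathcal F_{k-1}])^2\le \E\psi(X_k)^2\le |\psi|_\infty^2$, whereas your sup-norm bound $(2|g|_*)^2$ on the squared increment is a factor $4$ looser and would yield $8$ rather than $2$ in the leading term; since you already flag constant-tracking as the residual bookkeeping, this is exactly the place to tighten.
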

It is worth noting that
  $|f|_* \leq \min ( |f|_\infty , |f- \mu f|_\infty )$ 
and hence either quantity on the right can replace $|f|_*$ in the
above estimates. 

\section{A nearby Markov chain}
\label{sec:nearby}

Now consider a second Markov chain $\P_\epsilon$. As the notation
suggests we are often  interested  in the setting when we have a
collection of Markov kernels $\{\P_\epsilon : \epsilon \in
(0,\epsilon_0]\}$ for some constant $\epsilon_0$. We want to
understand in what sense the long time dynamics of $\P_\epsilon$ are close to
those of $\P$. We begin with the following simple assumption.
\begin{assumption}\label{a:localApprox}
   There exists a constant $\epsilon>0$ so that 
  \begin{align*}
  \|  \P_\epsilon(x,\ccdot)-\P(x,\ccdot)\|_{TV} \leq \epsilon 
  \end{align*}
for     all $x \in\X$.
\end{assumption}

We have the following result
\begin{proposition}
  Under  Assumptions \ref{a:doeblin} and \ref{a:localApprox}, any
  stationary distribution $\mu_\epsilon$ of $\P_\epsilon$ satisfies
  \begin{align*}
    \|\mu - \mu_\epsilon\|_{TV} \leq \frac{\epsilon}{a}
  \end{align*}
\end{proposition}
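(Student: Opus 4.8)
The plan is to exploit the fact that $\mu_\epsilon$ is stationary for $\P_\epsilon$, so $\mu_\epsilon = \mu_\epsilon \P_\epsilon$, and to compare $\mu_\epsilon \P_\epsilon$ with $\mu_\epsilon \P$ using Assumption~\ref{a:localApprox}, while using Theorem~\ref{thm:basicE} to contract the error coming from $\P$. Concretely, I would write, for any $n \geq 1$,
\begin{align*}
  \|\mu - \mu_\epsilon\|_{TV} = \|\mu \P^n - \mu_\epsilon \P_\epsilon^n\|_{TV} \leq \|\mu \P^n - \mu_\epsilon \P^n\|_{TV} + \|\mu_\epsilon \P^n - \mu_\epsilon \P_\epsilon^n\|_{TV}.
\end{align*}
The first term is bounded by $(1-a)^n \|\mu - \mu_\epsilon\|_{TV}$ by Theorem~\ref{thm:basicE}. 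For the second term I would telescope: $\mu_\epsilon \P^n - \mu_\epsilon \P_\epsilon^n = \sum_{k=0}^{n-1} \mu_\epsilon \P_\epsilon^k (\P - \P_\epsilon) \P^{n-1-k}$, and since $\P^{n-1-k}$ is a contraction on signed measures in total variation and $\mu_\epsilon \P_\epsilon^k$ is a probability measure, each summand is bounded by $\sup_x \|\P(x,\ccdot) - \P_\epsilon(x,\ccdot)\|_{TV} \leq \epsilon$, giving a bound of $n\epsilon$ — but this grows in $n$, which is too weak.

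The fix, and the step I expect to be the main obstacle, is to contract the tail of the telescoping sum as well. Better: bound the second term directly via the one-step estimate plus recursion. Setting $d_n = \|\mu_\epsilon \P^n - \mu_\epsilon \P_\epsilon^n\|_{TV}$ (so $d_0 = 0$), write
\begin{align*}
  \mu_\epsilon \P_\epsilon^{n} - \mu_\epsilon \P^{n} = (\mu_\epsilon \P_\epsilon^{n-1})\P_\epsilon - (\mu_\epsilon \P_\epsilon^{n-1})\P + (\mu_\epsilon \P_\epsilon^{n-1} - \mu_\epsilon \P^{n-1})\P,
\end{align*}
so $d_n \leq \epsilon + (1-a) d_{n-1}$, whence $d_n \leq \epsilon/a$ for all $n$. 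Combining, $\|\mu - \mu_\epsilon\|_{TV} \leq (1-a)^n \|\mu - \mu_\epsilon\|_{TV} + \epsilon/a$. Since $\mu,\mu_\epsilon$ are probability measures, $\|\mu - \mu_\epsilon\|_{TV} \leq 1 < \infty$, so letting $n \to \infty$ kills the first term and yields $\|\mu - \mu_\epsilon\|_{TV} \leq \epsilon/a$.

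Two points deserve care. First, one must know a priori that a stationary $\mu_\epsilon$ exists and that $\|\mu - \mu_\epsilon\|_{TV}$ is finite before taking $n\to\infty$; finiteness is automatic since both are probability measures. Second, one must verify that $\P$ acts as a contraction (operator norm $\leq 1$) on the space of signed measures of bounded total variation — this is the standard fact that applying a Markov kernel does not increase total variation, which follows from the dual characterization $\|\nu\|_{TV} = \frac12 \sup_{|f|_\infty \leq 1} \nu f$ together with $|\P f|_\infty \leq |f|_\infty$. Once these are in hand the argument is just the recursion above; the only real subtlety is resisting the naive telescoping bound $n\epsilon$ and instead setting up the geometric recursion that produces the clean $\epsilon/a$.
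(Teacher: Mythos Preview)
Your proof is correct and uses exactly the same ingredients as the paper---the triangle inequality splitting at $\mu_\epsilon\P$, the Doeblin contraction from Theorem~\ref{thm:basicE}, and the one-step bound from Assumption~\ref{a:localApprox}---but the paper dispenses with the recursion and the limit by simply taking $n=1$: from $\|\mu-\mu_\epsilon\|_{TV}\leq(1-a)\|\mu-\mu_\epsilon\|_{TV}+\epsilon$ one rearranges directly to $\|\mu-\mu_\epsilon\|_{TV}\leq\epsilon/a$. Your detour through $d_n$ and $n\to\infty$ is unnecessary, since the self-bounding inequality already appears at the first step.
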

\begin{proof}
  \begin{align*}
    \|\mu - \mu_\epsilon\|_{TV} &\leq   \|\mu\P -
    \mu_\epsilon\P\|_{TV}+  \|\mu_\epsilon\P -
    \mu_\epsilon\P_\epsilon\|_{TV}\leq (1-a)  \|\mu - \mu_\epsilon\|_{TV}+ \epsilon 
  \end{align*}
The first inequality follows from the triangle inequality; the second
used Assumption~\ref{a:doeblin} for the first term and
Assumption~\ref{a:localApprox} for the second term.  Rearranging the
resulting inequality produces the quoted result.
\end{proof}

\begin{proposition} Let  Assumptions \ref{a:doeblin} and
  \ref{a:localApprox} hold with $\epsilon \in (0,\frac{a}2)$. 
  Then  Assumption~\ref{a:doeblin} holds for the Markov operator
  $\P_\epsilon$ with the constant ``$a$'' equal to $a-2\epsilon$ which
  is less than 1 by construction. Hence for such $\epsilon$ the chain
  has a unique stationary distribution $\mu_\epsilon$ to which it
  converges exponentially. 
\end{proposition}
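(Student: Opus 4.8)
The plan is to reduce the whole statement to one application of the triangle inequality for the total variation distance, followed by a direct invocation of Theorem~\ref{thm:basicE}.

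First I would fix arbitrary $x,y\in\X$ and insert the two reference kernels $\P(x,\ccdot)$ and $\P(y,\ccdot)$ between $\P_\epsilon(x,\ccdot)$ and $\P_\epsilon(y,\ccdot)$, using that $\|\ccdot\|_{TV}$ is a metric on probability measures:
\begin{align*}
  \|\P_\epsilon(x,\ccdot)-\P_\epsilon(y,\ccdot)\|_{TV}
  \leq \|\P_\epsilon(x,\ccdot)-\P(x,\ccdot)\|_{TV}
  + \|\P(x,\ccdot)-\P(y,\ccdot)\|_{TV}
  + \|\P(y,\ccdot)-\P_\epsilon(y,\ccdot)\|_{TV}.
\end{align*}
Then I would bound the first and third terms by $\epsilon$ via Assumption~\ref{a:localApprox} (which is stated uniformly in the state, so the same $\epsilon$ works at both $x$ and $y$) and the middle term by $1-a$ via Assumption~\ref{a:doeblin}. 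Since $x,y$ were arbitrary, this gives
$\|\P_\epsilon(x,\ccdot)-\P_\epsilon(y,\ccdot)\|_{TV}\leq 1-a+2\epsilon = 1-(a-2\epsilon)$ for all $x,y\in\X$, which is exactly Assumption~\ref{a:doeblin} for $\P_\epsilon$ with Doeblin constant $a-2\epsilon$. The hypothesis $\epsilon\in(0,\tfrac a2)$ together with $a\in(0,1)$ makes $a-2\epsilon\in(0,a)\subset(0,1)$, so the constant is genuinely admissible.

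Finally, having verified the Doeblin condition for $\P_\epsilon$ with a constant in $(0,1)$, I would simply quote Theorem~\ref{thm:basicE} with $\P$ replaced by $\P_\epsilon$ and $a$ replaced by $a-2\epsilon$: this yields a unique stationary measure $\mu_\epsilon$ for $\P_\epsilon$ and the contraction $\|\nu_1\P_\epsilon^n-\nu_2\P_\epsilon^n\|_{TV}\leq(1-a+2\epsilon)^n\|\nu_1-\nu_2\|_{TV}$, hence exponential convergence to $\mu_\epsilon$ from any initial distribution. There is essentially no obstacle: the only points requiring a moment's care are that $\|\ccdot\|_{TV}$ is a metric (so the triangle inequality is available) and the trivial arithmetic check that $0<a-2\epsilon<1$ under the stated range of $\epsilon$.
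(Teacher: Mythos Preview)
Your argument is correct. The paper does not actually supply a proof of this proposition (it is stated and then the text moves on), but your triangle-inequality reduction to Assumption~\ref{a:doeblin} for $\P_\epsilon$ followed by an appeal to Theorem~\ref{thm:basicE} is exactly the intended (and standard) argument, and it mirrors the technique the paper uses explicitly in the proof of Proposition~\ref{prop:assumptionsEquiv}.
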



We now consider couplings of the chains $X_n,X_n^\epsilon$ evolving
according to the transition kernels $\P$ and $\P_\epsilon$ respectively
with $X_0$ and $X_0^\epsilon$ as initial conditions.

\begin{theorem}\label{cor:invAvg} Assume that Assumption~\ref{a:localApprox}  and 
  Assumption~\ref{a:crossDoeblin} hold. Then for any two probability 
  measures $\nu_1$ and $\nu_2$ on $\X$
  \begin{align*}
   \Big\|\frac1n \sum_{k=0}^{n-1} \nu_1\P^k -
   \frac1n \sum_{k=0}^{n-1}\nu_2\P_\epsilon^k\Big\|_{TV} \leq 
   \frac{\epsilon}{\alpha+\epsilon} +\frac{1-(1-\alpha-\epsilon)^n}{n(\alpha+\epsilon)}
    \Big(\|\nu_1-\nu_2\|_{TV} -\frac\epsilon{\alpha+\epsilon}\Big)\,. 
  \end{align*}
Furthermore, there exists a coupling of the process
$(X_n,X_n^\epsilon)$ with $\text{Law}(X_0)= \nu_1$ and
$\text{Law}(X_0^\epsilon)= \nu_2$ and a random
constant $K=K(\alpha,\epsilon,X_0,X_0^\epsilon)$ so 
\begin{align*}
  \frac12 \Big|\frac1n \sum_{k=0}^{n-1}    f(X_k)
     - \frac1n \sum_{k=0}^{n-1}    f(X_k^\epsilon)
    \Big| \leq \frac{\epsilon}{\alpha+\epsilon}+\frac{K}{n}+ 2\sqrt{\frac{\log(n)}{n}}
\end{align*}
for all $n >0$ and 
  \begin{align*}
     \E\Big(\frac1n \sum_{k=0}^{n-1}    f(X_k)
     - \frac1n \sum_{k=0}^{n-1}    f(X_k^\epsilon)
    \Big)^2 \leq  4|f|_*^2\Big(  \frac{\epsilon^2}{(\alpha+\epsilon)^2} +  \frac2{n^2}\frac1{(\alpha+\epsilon)^2} + \frac2n \frac{\alpha^2}{(\alpha+\epsilon)^4}\Big)
  \end{align*}
and for all $\lambda>0$,
\begin{align*}
  \PP\Big(\big|\frac1n \sum_{k=0}^{n-1}    f(X_k)- \frac1n \sum_{k=0}^{n-1}    f(X_k^\epsilon)  \big|
  \geq 2|f|_*\big( \frac\epsilon{\alpha+\epsilon} +
  \frac{\one_{\{X_0\neq X_0^\epsilon\}}}{n(\alpha+\epsilon)} + \frac{\lambda}{\sqrt{n}}
 \big) \Big)\leq e^{ - \tfrac{(\alpha+\epsilon)^2}{2}\lambda^2}\, .
\end{align*}
\end{theorem}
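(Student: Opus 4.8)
The plan is to construct an explicit coupling in which the two chains, once they meet, stay together with high probability, and then to track a single two-state ``bounding'' process $Z_k$ recording whether $X_k=X_k^\epsilon$ (coupled) or not (decoupled). Under Assumption~\ref{a:crossDoeblin}, a maximal coupling of $\P(x,\ccdot)$ and $\P_\epsilon(y,\ccdot)$ succeeds with probability at least $\alpha+\epsilon$ when the chains currently agree (and Assumption~\ref{a:localApprox} contributes the $\epsilon$ term from the discrepancy between the kernels at a common point), so from the coupled state the process decouples with probability at most $\epsilon/(\alpha+\epsilon)$ (roughly), while from a decoupled state it recouples with probability at least $\alpha+\epsilon$. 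This makes $Z_k$ stochastically dominated by a two-state Markov chain whose stationary probability of being decoupled is exactly $\epsilon/(\alpha+\epsilon)$; this is the source of the leading term in every bound. I would first state and prove this domination, then read off the first inequality on averaged marginals by noting that $\|\nu_1\P^k-\nu_2\P_\epsilon^k\|_{TV}\le \PP(X_k\neq X_k^\epsilon)=\PP(Z_k=\text{decoupled})$, summing the explicit geometric expression for this two-state chain started from $\one_{\{X_0\neq X_0^\epsilon\}}$, and dividing by $n$.

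Next I would handle the three pathwise/moment/concentration bounds simultaneously by writing
\begin{align*}
  \frac1n\sum_{k=0}^{n-1} f(X_k) - \frac1n\sum_{k=0}^{n-1} f(X_k^\epsilon)
  = \frac1n\sum_{k=0}^{n-1}\big(f(X_k)-f(X_k^\epsilon)\big)\one_{\{X_k\neq X_k^\epsilon\}},
\end{align*}
so that each summand is bounded in absolute value by $2|f|_*\,\one_{\{Z_k=\text{decoupled}\}}$ (using that replacing $f$ by $f-\lambda$ changes nothing in the difference, which is what lets $|f|_*$ appear rather than $|f|_\infty$). The problem thus reduces to controlling the occupation time $S_n=\sum_{k=0}^{n-1}\one_{\{Z_k=\text{decoupled}\}}$ of the bad state for the dominating two-state chain. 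For the second moment bound I would solve the Poisson equation for the two-state chain explicitly — its solution is a bounded function with an explicit Lipschitz-type constant in terms of $\alpha+\epsilon$ and the decoupling/recoupling probabilities — and then use the martingale decomposition $S_n - n\cdot\frac{\epsilon}{\alpha+\epsilon} = M_n + (\text{boundary terms})$ exactly as in the proof of Theorem on p.~(the LLN/concentration theorem in Section~\ref{Sec:Basic}), bounding $\E M_n^2$ by the sum of conditional variances. The three terms $\epsilon^2/(\alpha+\epsilon)^2$, $2/(n^2(\alpha+\epsilon)^2)$, and $2\alpha^2/(n(\alpha+\epsilon)^4)$ should emerge respectively as the squared bias, the squared boundary/Poisson-correction term, and the martingale variance term.

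For the concentration inequality I would again use the Poisson-equation martingale $M_n$, which has increments bounded by an explicit multiple of $1/(\alpha+\epsilon)$, and apply the Azuma--Hoeffding inequality; the $\one_{\{X_0\neq X_0^\epsilon\}}/(n(\alpha+\epsilon))$ term is the boundary correction coming from the initial condition of the bounding chain (it vanishes if the chains start coupled), the $\epsilon/(\alpha+\epsilon)$ term is the stationary bias, and the $\lambda/\sqrt n$ term together with the constant $(\alpha+\epsilon)^2/2$ in the exponent come from the Azuma bound applied with the explicit increment size. For the pathwise bound I would instead choose $\lambda=\lambda_n$ growing like $\sqrt{\log n}$ in the concentration inequality and sum the resulting probabilities over $n$ via Borel--Cantelli to obtain an almost-sure finite random constant $K$; the factor $2\sqrt{\log(n)/n}$ is precisely this choice, and $K$ absorbs the finitely many exceptional $n$ and the initial-condition term. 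The main obstacle, and where I would spend the most care, is making the stochastic domination of $Z_k$ by the two-state chain fully rigorous — in particular checking that the maximal coupling can be implemented measurably and jointly with the rest of the construction so that the recoupling probability is genuinely bounded below by $\alpha+\epsilon$ uniformly in the current (possibly decoupled) positions, and that one really obtains domination (not just equality of one-step probabilities) of the occupation time $S_n$; everything after that is a routine, if somewhat lengthy, application of the two-state Poisson-equation machinery already used in Section~\ref{Sec:Basic}.
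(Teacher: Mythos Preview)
Your proposal is essentially the paper's own proof: construct the maximal coupling of $\P_\epsilon(\xi_1,\ccdot)$ and $\P(\xi_2,\ccdot)$, dominate the disagreement indicator $Z_n^\epsilon$ by a two-state chain $Y_n$, and then apply the Poisson equation for $Y_n$ together with Azuma and Borel--Cantelli exactly as you describe (the paper also optimizes over couplings of $(X_0,X_0^\epsilon)$ to convert $\PP(X_0\neq X_0^\epsilon)$ into $\|\nu_1-\nu_2\|_{TV}$ for the first inequality). The only slip is in your heuristic one-step rates---the correct dominating chain decouples with probability at most $\epsilon$ from the coupled state (Assumption~\ref{a:localApprox}) and recouples with probability at least $\alpha$ from the decoupled state (Assumption~\ref{a:crossDoeblin}), giving stationary decoupled probability $\epsilon/(\alpha+\epsilon)$---but this is cosmetic and everything downstream matches the paper.
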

\begin{remark}
  Taking $\nu_1$ equal to $\mu$, the invariant measure of $\P$, will 
  produce estimates involving $\mu$, $\mu f$ and related quantities 
  more resembling Theorem~\ref{thm:basicE}. Alternative derivations of 
  estimates with this form can be found at the end of the Appendix in 
  Remark~\ref{rem: Closeness}. The disadvantage of this alternative 
  presentation is that it does not apply as directly to studying exit 
  times and other more pathwise variables as covered by the
  next result, Theorem~\ref{thm:Path}.
\end{remark}

Let $g$ be a real valued 
  function of a trajectory in $\X^\N$, which is the space of one-sided 
  infinite sequences $\{ X=(X_0,X_1,X_2,\dots) : X_k \in \X, k \in \N\}$. We will write $X$ and $X^\epsilon$ for 
  the entire trajectories. 

\begin{theorem}\label{thm:Path}
  Assume that Assumption~\ref{a:localApprox} holds and that 
  $\tau$  is a stoping time adapted to the filtration $\mathcal{F}_n
  = \sigma(X_0,X_1,X_2,\dots,X_n)$ with $\E \tau < \infty$.  Let $g$ be a function 
  of the path as described above. If $g(X)$ is measurable with 
  respect to $\mathcal{F}_\tau$ then we have the following
  result:
  \begin{align*}
    \| \text{Law}(g(X)) - \text{Law}(g(X^\epsilon)) \|_{TV} \leq 
    \epsilon \, \E \tau 
  \end{align*}
  if $\text{Law}(X_0)=\text{Law}(X_0^\epsilon)$.
\end{theorem}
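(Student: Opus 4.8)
The plan is to construct a coupling of the two trajectories $X$ and $X^\epsilon$ that keeps them equal until the first time their one-step transition kernels are ``forced apart,'' and then to bound the probability that they disagree by the time $\tau$ using a union bound over the (random) number of steps. Concretely, I would build the coupling step by step: at step $n$, given that $X_{n-1} = X_{n-1}^\epsilon = x$, use the maximal coupling of $\P(x,\cdot)$ and $\P_\epsilon(x,\cdot)$, so that $X_n \neq X_n^\epsilon$ with probability $\|\P_\epsilon(x,\cdot) - \P(x,\cdot)\|_{TV} \leq \epsilon$ by Assumption~\ref{a:localApprox}; once the chains have separated, let them evolve independently (or by any coupling — it will not matter). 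Let $T$ be the first time $X_n \neq X_n^\epsilon$. On the event $\{T > \tau\}$ the two paths agree on $\mathcal{F}_\tau$, hence $g(X) = g(X^\epsilon)$ since $g(X)$ is $\mathcal{F}_\tau$-measurable; therefore the standard coupling inequality gives
\begin{align*}
  \| \text{Law}(g(X)) - \text{Law}(g(X^\epsilon)) \|_{TV} \leq \PP(g(X) \neq g(X^\epsilon)) \leq \PP(T \leq \tau).
\end{align*}

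It then remains to show $\PP(T \leq \tau) \leq \epsilon\, \E\tau$. The key observation is that the process $D_n = \one_{\{X_n \neq X_n^\epsilon\}}$, together with the construction above, has the property that conditionally on $\mathcal{F}_{n-1}$ and on $D_{n-1} = 0$, the indicator $D_n$ is stochastically dominated by a Bernoulli($\epsilon$). Writing $\{T \leq \tau\} = \bigcup_{n \geq 1} \{T = n, n \leq \tau\}$ and noting that $\{T = n\} \subseteq \{D_{n-1} = 0\}$ while $\{n \leq \tau\} = \{\tau \geq n\} = \{\tau \leq n-1\}^c \in \mathcal{F}_{n-1}$ by the stopping time property, I would estimate
\begin{align*}
  \PP(T \leq \tau) \leq \sum_{n \geq 1} \PP(D_n = 1,\ D_{n-1} = 0,\ \tau \geq n) \leq \sum_{n \geq 1} \epsilon\, \PP(\tau \geq n) = \epsilon\, \E\tau,
\end{align*}
using the tail-sum formula $\E\tau = \sum_{n\geq 1}\PP(\tau \geq n)$ for the integer-valued stopping time $\tau$, and the fact that $\{\tau \geq n\}$ is $\mathcal{F}_{n-1}$-measurable so we may condition on it before applying the one-step $\epsilon$ bound.

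The main obstacle I anticipate is making the conditioning in the last display fully rigorous: one must check that the event $\{D_{n-1} = 0,\ \tau \geq n\}$ lies in $\mathcal{F}_{n-1}$ (true, since both are), and then that the coupling was constructed so that $\PP(D_n = 1 \mid \mathcal{F}_{n-1}) \le \epsilon$ on $\{D_{n-1}=0\}$ — this is exactly the defining property of the step-$n$ maximal coupling, but it should be stated carefully, perhaps by invoking the measurable selection of maximal couplings so that the joint kernel is itself a genuine Markov transition kernel on $\X \times \X$. A secondary, more cosmetic point is handling the case $\E\tau < \infty$ but $\tau$ not bounded: the tail-sum argument handles this automatically since all terms are nonnegative and the sum converges. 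I would also remark that the hypothesis $\text{Law}(X_0) = \text{Law}(X_0^\epsilon)$ is used only to start the coupling with $D_0 = 0$ almost surely; without it one would pick up an extra $\|\text{Law}(X_0) - \text{Law}(X_0^\epsilon)\|_{TV}$ term, matching the $\one_{\{X_0 \neq X_0^\epsilon\}}$ contributions seen in Theorem~\ref{cor:invAvg}.
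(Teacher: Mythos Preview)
Your argument is correct and follows the same overall architecture as the paper: build the step-by-step maximal coupling, reduce the total variation bound to $\PP(T\le\tau)$ via the coupling inequality, and then bound $\PP(T\le\tau)$ by $\epsilon\,\E\tau$. The difference lies entirely in how that last bound is obtained. The paper packages the one-step $\epsilon$ bound into a dominating two-state chain $Y_n$ (Section~\ref{sec:buildingTheCoupling}), passes to its hitting time $\sigma_\epsilon$ of state $1$, and in Section~\ref{sec:decoouling-time} computes $\PP(\sigma_\epsilon>\tau)=\E(1-\epsilon)^\tau\ge 1-\epsilon\,\E\tau$ via a Taylor expansion with Lagrange remainder (after a truncation argument). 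Your tail-sum / union-bound computation
\[
\PP(T\le\tau)\ \le\ \sum_{n\ge1}\epsilon\,\PP(\tau\ge n)\ =\ \epsilon\,\E\tau
\]
is more direct: it dispenses with the auxiliary chain $Y_n$ and the analytic expansion, and it does not require any independence between the decoupling time and $\tau$ (a point the paper's factorisation $\PP(\sigma_\epsilon>\tau)=\sum_k\PP(\sigma_\epsilon>k)\PP(\tau=k)$ leaves implicit). The price is that the paper's route, once the dominating chain is in place, yields the slightly sharper intermediate statement $\PP(T>\tau)\ge\E(1-\epsilon)^\tau$, which your union bound does not recover.

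One small correction to your self-identified obstacle: the event $\{D_{n-1}=0\}$ is \emph{not} in $\mathcal{F}_{n-1}=\sigma(X_0,\dots,X_{n-1})$, since it depends on $X^\epsilon_{n-1}$ as well. It is, however, in the joint filtration $\mathcal{G}_{n-1}=\sigma(X_0,\dots,X_{n-1},X_0^\epsilon,\dots,X_{n-1}^\epsilon)$, and that is all you need: $\{\tau\ge n\}\in\mathcal{F}_{n-1}\subseteq\mathcal{G}_{n-1}$, and the maximal-coupling construction gives $\PP(D_n=1\mid\mathcal{G}_{n-1})\le\epsilon$ on $\{D_{n-1}=0\}$. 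With that adjustment your conditioning step goes through verbatim.
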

An interesting example of such a function is the hitting time of a set 
$A$. In this case, $g(X)=\inf\{ n  \geq 0 : X_n \in A\}$.

\section{Understanding Through Coupling}
\label{sec:underst-thro-coupl}

The main point of this section is to give a path-wise perspective on
results of this flavor. We will use the following assumption which in
our setting is more natural than  Assumption~\ref{a:doeblin}. It can
be viewed as a ``cross-Doeblin'' condition.
\begin{assumption}\label{a:crossDoeblin}
    There exists a constant $\alpha\in (0,1)$ and  $\epsilon_0 \in (0,\alpha)$ so that
  \begin{align*}
  \|  \P_\epsilon(x,\ccdot)-\P(y,\ccdot)\|_{TV} \leq 1-\alpha
  \end{align*}
for     all $x,y \in\X$ and $\epsilon \in (0,\epsilon_0]$.
\end{assumption}

\begin{proposition}\label{prop:assumptionsEquiv}
  Assumptions \ref{a:doeblin} and \ref{a:localApprox} holding with
  parameters $\epsilon_0$ and $a$ implies  Assumptions \ref{a:localApprox}  and
  \ref{a:crossDoeblin}  hold with the same $\epsilon_0$ and
  $\alpha=a-\epsilon$. Similarly   assumptions  \ref{a:localApprox}  and
  \ref{a:crossDoeblin}   holding with
  parameters $\epsilon_0$ and $\alpha$ implies  Assumptions
  \ref{a:doeblin} and \ref{a:localApprox}  hold with the same $\epsilon_0$ and
  $a=\alpha-\epsilon$.
\end{proposition}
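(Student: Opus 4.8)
The plan is to read both implications directly off the triangle inequality for the total variation metric, splitting a pairwise distance through a well-chosen intermediate kernel. Note first that in each direction Assumption~\ref{a:localApprox} appears both among the hypotheses and in the conclusion, so there is nothing to prove for it; all the content is in passing between the Doeblin condition (Assumption~\ref{a:doeblin}) and the cross-Doeblin condition (Assumption~\ref{a:crossDoeblin}). Throughout I read ``Assumption~\ref{a:localApprox} holds with parameter $\epsilon_0$'' as the statement $\|\P_\epsilon(x,\ccdot)-\P(x,\ccdot)\|_{TV}\le\epsilon$ for every $\epsilon\in(0,\epsilon_0]$ and every $x\in\X$, consistent with the family $\{\P_\epsilon\}$ of Section~\ref{sec:nearby}.

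For the first implication, assume Assumptions~\ref{a:doeblin} and~\ref{a:localApprox} hold with constants $a$ and $\epsilon_0$. Fix $x,y\in\X$ and $\epsilon\in(0,\epsilon_0]$ and insert $\P(x,\ccdot)$:
\begin{align*}
  \|\P_\epsilon(x,\ccdot)-\P(y,\ccdot)\|_{TV}
  &\le \|\P_\epsilon(x,\ccdot)-\P(x,\ccdot)\|_{TV}+\|\P(x,\ccdot)-\P(y,\ccdot)\|_{TV}\\
  &\le \epsilon+(1-a)=1-(a-\epsilon).
\end{align*}
Hence Assumption~\ref{a:crossDoeblin} holds with $\alpha=a-\epsilon$ (taking $\epsilon=\epsilon_0$ if one wants a single $\alpha$ valid over the whole range). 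It remains only to check $\alpha\in(0,1)$ and $\epsilon_0\in(0,\alpha)$; both follow from the standing smallness hypothesis on $\epsilon_0$ (the same $\epsilon_0<a/2$ used in the preceding proposition), since then $a-\epsilon\ge a-\epsilon_0>\epsilon_0>0$.

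The second implication is the mirror image: assuming Assumptions~\ref{a:localApprox} and~\ref{a:crossDoeblin} with constants $\epsilon_0$ and $\alpha$, fix $x,y\in\X$ and $\epsilon\in(0,\epsilon_0]$ and now insert $\P_\epsilon(x,\ccdot)$:
\begin{align*}
  \|\P(x,\ccdot)-\P(y,\ccdot)\|_{TV}
  &\le \|\P(x,\ccdot)-\P_\epsilon(x,\ccdot)\|_{TV}+\|\P_\epsilon(x,\ccdot)-\P(y,\ccdot)\|_{TV}\\
  &\le \epsilon+(1-\alpha)=1-(\alpha-\epsilon),
\end{align*}
so Assumption~\ref{a:doeblin} holds with $a=\alpha-\epsilon$, which lies in $(0,1)$ because $0<\epsilon\le\epsilon_0<\alpha<1$ by Assumption~\ref{a:crossDoeblin}. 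There is no genuine obstacle beyond this constant bookkeeping: the one point deserving attention is that the $\alpha=a-\epsilon$ produced in the first implication must still satisfy the structural requirement $\epsilon_0\in(0,\alpha)$ built into Assumption~\ref{a:crossDoeblin}, which is precisely why the smallness of $\epsilon_0$ must be kept in force; the reverse implication has no analogous worry since $\epsilon<\alpha$ is already part of Assumption~\ref{a:crossDoeblin}.
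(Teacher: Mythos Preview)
Your proof is correct and follows the same route as the paper's: both directions are a single application of the triangle inequality for the total variation metric, inserting $\P(x,\ccdot)$ in the first implication and $\P_\epsilon(x,\ccdot)$ in the second. Your version adds some explicit bookkeeping on the ranges of the constants (e.g.\ checking $\alpha\in(0,1)$ and $\epsilon_0<\alpha$), which the paper's proof leaves implicit, but the core argument is identical.
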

\begin{proof}[Proof of Proposition~\ref{prop:assumptionsEquiv}]
 If  Assumptions \ref{a:doeblin} and \ref{a:localApprox}  hold then
 \begin{align*}
   \|  \P_\epsilon(x,\ccdot)-\P(y,\ccdot)\|_{TV} &\leq \|
   \P_\epsilon(x,\ccdot)-\P(x,\ccdot)\|_{TV}
   +\|\P(x,\ccdot)-\P(y,\ccdot)\|_{TV} \\&\leq 1-a +\epsilon
 \end{align*}
which implies that  Assumption \ref{a:crossDoeblin} holds if
$\epsilon < a$ with $\alpha=a -\epsilon$. In the other
direction, 
\begin{align*}
\|  \P(x,\ccdot)-\P(y,\ccdot)\|_{TV}  &\leq  \|
  \P_\epsilon(x,\ccdot)-\P(y,\ccdot)\|_{TV}  + \|
                                        \P_\epsilon(x,\ccdot)-\P(x,\ccdot)\|_{TV}\\
  &\leq1- \alpha +\epsilon
\end{align*}
which completes the proof.
\end{proof}

The following Theorem~\ref{thm:avgProb} is one of the main results of
this note. It gives the existence of a coupling with certain
properties. With this result in hand, the proof of Theorem~\ref{cor:invAvg}
follows in a fashion inspired by the Coupling Time inequality of Aldous
used to bound mixing rates.
  
\begin{theorem}\label{thm:avgProb} Assume that Assumption~\ref{a:localApprox}  and
  Assumption~\ref{a:crossDoeblin} hold. Then for any pair of initial
  conditions $(X_0,X_0^\epsilon)$ there exists a coupling
  $(X_n,X_n^\epsilon)$ of the two chains so that
  \be
  \frac1n \sum_{k=0}^{n-1} \PP( X_k \not = X_k^\epsilon)  \le 
 \frac{\epsilon}{\alpha + \epsilon} +\frac{1-(1-\alpha-\epsilon)^n}{n(\alpha+\epsilon)}
  \Big( \PP\{X_0 \neq X_0^\epsilon\}  - \frac{\epsilon}{\alpha+\epsilon}\Big) \,.
  \ee
\label{thm:almostSure}
Furthermore  there exists a random constant $K=K(\alpha,\epsilon,X_0,X_0^\epsilon)$
  such that with probability one
  \begin{align*}
  \frac1n 
    \sum_{k=0}^{n-1}  \1_{\{X_k \neq X_k^\epsilon\}}\leq \frac{\epsilon}{\alpha+\epsilon} +  
    \frac{K}{n} + 2\sqrt{\frac{\log(n)}{n}}
  \end{align*}
for all $n \geq 0$. Finally we have the probabilistic bounds:
\begin{align*}
  \E \Big(\big[ \frac1n\sum_{k=0}^{n-1} \one_{\{X_k\neq X^\epsilon_k\}} -
  \frac{\epsilon}{\alpha+\epsilon}\big]^+\Big)^2  \leq   \frac{2}{n^2}\frac1{(\alpha+\epsilon)^2} + \frac2n\frac{\alpha^2}{(\alpha+\epsilon)^4}
\end{align*}
where $[x]^+=\max(x,0)$ and for any $\lambda >0$
\begin{align*}
   \PP\Big(  \frac1n \sum_{k=0}^{n-1}  \1_{\{X_k \neq X_k^\epsilon\}} 
  \geq \frac\epsilon{\alpha+\epsilon}+ \frac{\one_{X_0\neq X_0^\epsilon}}{n(\alpha+\epsilon)}+ \frac{\lambda}{\sqrt{n}} \Big)
  \leq e^{-\frac{(\alpha+\epsilon)^2}{2}\lambda^2}
\end{align*}
\end{theorem}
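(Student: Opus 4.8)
The plan is to build the maximal coupling that both assumptions point to and then to study the single bit recording whether the two chains currently agree. Given the present pair $(x,y)$, advance $(X_{k+1},X_{k+1}^\epsilon)$ as follows: if $x\neq y$, use a maximal coupling of $\P(y,\ccdot)$ and $\P_\epsilon(x,\ccdot)$, which by Assumption~\ref{a:crossDoeblin} puts mass at least $\alpha$ on $\{X_{k+1}=X_{k+1}^\epsilon\}$ (the chains coalesce with probability $\ge\alpha$); if $x=y$, use a maximal coupling of $\P(x,\ccdot)$ and $\P_\epsilon(x,\ccdot)$, which by Assumption~\ref{a:localApprox} puts mass at least $1-\epsilon$ on the diagonal (they stay coalesced with probability $\ge 1-\epsilon$). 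A standard measurable selection makes this a genuine transition kernel on $\X\times\X$ whose one-step marginals are $\P$ and $\P_\epsilon$; iterating it from $(X_0,X_0^\epsilon)$ gives the coupling asserted by the theorem.

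Next I would record $Z_k=\one_{\{X_k\neq X_k^\epsilon\}}$, adapted to $\mathcal F_k=\sigma(X_0,X_0^\epsilon,\dots,X_k,X_k^\epsilon)$. The construction yields the one-step inequalities $\PP(Z_{k+1}=1\mid\mathcal F_k)\le\epsilon$ on $\{Z_k=0\}$ and $\PP(Z_{k+1}=1\mid\mathcal F_k)\le 1-\alpha$ on $\{Z_k=1\}$; equivalently $(Z_k)$ is stochastically dominated by the two-state Markov chain with transition probabilities $\epsilon$ from $0$ to $1$ and $\alpha$ from $1$ to $0$, whose invariant probability of state $1$ is $\tfrac{\epsilon}{\alpha+\epsilon}$. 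The first displayed bound is then just the relaxation of this chain's mean: writing $p_k=\PP(Z_k=1)$ the inequalities give $p_{k+1}\le\epsilon+(1-\alpha-\epsilon)p_k$, hence $p_k\le\tfrac{\epsilon}{\alpha+\epsilon}+(1-\alpha-\epsilon)^k\bigl(p_0-\tfrac{\epsilon}{\alpha+\epsilon}\bigr)$ with $p_0=\PP\{X_0\neq X_0^\epsilon\}$; averaging over $k=0,\dots,n-1$ and summing the geometric series produces the claimed estimate.

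For the three pathwise estimates I would use the Poisson equation for the dominating chain, whose solution is simply $h(z)=z/(\alpha+\epsilon)$, satisfying $h-\widetilde\P h=z-\tfrac{\epsilon}{\alpha+\epsilon}$ for the two-state kernel $\widetilde\P$. Since $h$ is non-negative and non-decreasing and, by the one-step inequalities, $\E[h(Z_{k+1})\mid\mathcal F_k]\le\widetilde\P h(Z_k)$, telescoping gives
\begin{align*}
\sum_{k=0}^{n-1}\Bigl(Z_k-\tfrac{\epsilon}{\alpha+\epsilon}\Bigr)\ \le\ h(Z_0)-h(Z_n)+\sum_{k=1}^{n}D_k\ \le\ \frac{\one_{\{X_0\neq X_0^\epsilon\}}}{\alpha+\epsilon}+\sum_{k=1}^{n}D_k,
\end{align*}
where $D_k=h(Z_k)-\E[h(Z_k)\mid\mathcal F_{k-1}]$ is a martingale difference sequence with $|D_k|\le\tfrac1{\alpha+\epsilon}$. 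Dividing by $n$: the boundary term is exactly the $\tfrac{\one_{\{X_0\neq X_0^\epsilon\}}}{n(\alpha+\epsilon)}$ correction; Azuma--Hoeffding applied to $\sum_{k=1}^n D_k$ with this range, at deviation $\lambda\sqrt n$, gives the sub-Gaussian tail $e^{-(\alpha+\epsilon)^2\lambda^2/2}$; orthogonality of the increments together with a bound on their conditional variance (controlled via the mean estimate just proved, exploiting that $Z_{k-1}=1$ is rare) controls $\E\bigl([\tfrac1n\sum_k(Z_k-\tfrac{\epsilon}{\alpha+\epsilon})]^+\bigr)^2$; and a Borel--Cantelli argument over a geometric sequence of times, combined with monotonicity of the partial sums, bounds the martingale average by $2\sqrt{\log n/n}$ for all large $n$, the finitely many exceptional indices and the deterministic boundary contribution being absorbed into the random constant $K$.

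The main obstacle is the coupling construction itself: one must verify that a maximal coupling can be chosen as a measurable kernel on $\X\times\X$ that simultaneously respects both marginals and maximizes the diagonal mass in each of the two regimes, since all four estimates rest solely on the resulting pair of one-step inequalities — this is exactly where Assumptions~\ref{a:localApprox} and \ref{a:crossDoeblin} enter. After that the work is routine: tracking the precise constants in the $L^2$ and almost-sure bounds through the conditional variance of the increments, and (a minor point) noting that the termwise comparison $p_k\le\tfrac{\epsilon}{\alpha+\epsilon}+(1-\alpha-\epsilon)^k(p_0-\tfrac{\epsilon}{\alpha+\epsilon})$ is immediate when $\alpha+\epsilon\le 1$ and needs a little more care (still yielding the same averaged bound, e.g.\ via the deterministic form of the Poisson identity above) when $\alpha+\epsilon>1$.
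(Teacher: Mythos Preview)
Your proposal is correct and follows the same architecture as the paper: a maximal (diagonal-maximizing) coupling giving the one-step bounds $\PP(Z_{k+1}=1\mid Z_k=0)\le\epsilon$ and $\PP(Z_{k+1}=1\mid Z_k=1)\le 1-\alpha$, domination by the two-state chain with stationary mass $\epsilon/(\alpha+\epsilon)$ at state $1$, and a Poisson-equation/martingale decomposition combined with Azuma and Borel--Cantelli for the pathwise estimates. The only notable difference is that the paper explicitly realizes the dominating two-state chain $Y_k$ on the same probability space (a monotone coupling with $Z_k\le Y_k$ a.s.) and then does all the martingale analysis on the genuinely Markov process $Y_k$, where the Poisson identity is an equality; you instead work directly with $Z_k$ via the super-martingale inequality $\E[h(Z_{k+1})\mid\mathcal F_k]\le\widetilde\P h(Z_k)$, which is slightly more economical but yields only one-sided identities---fine here since every claim is one-sided. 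One small point: to recover the exact variance constant $\alpha^2/(\alpha+\epsilon)^4$ you should not try to exploit that ``$Z_{k-1}=1$ is rare'' but simply note that your increments $D_k$ are unchanged if $h$ is shifted by a constant, so you may replace $h$ by the centered solution $\psi_\epsilon$ with $|\psi_\epsilon|_\infty=\alpha/(\alpha+\epsilon)^2$ (for $\epsilon\le\alpha$) and bound $\E D_k^2\le|\psi_\epsilon|_\infty^2$, exactly as the paper does.
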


The proof of the first part of this theorem will be given in Section~\ref{sec:EBound}. 
The second part with the almost sure estimates and probabilistic bounds is
proved in Section~\ref{sec:ASBound}.


We have a second result that speaks to the distributions
of exit times and other path related quantities. The proof is given
in Section~\ref{sec:decoouling-time}.

\begin{theorem} \label{thm:deCoupling} Assume that Assumption~\ref{a:localApprox} holds. 
  Let $\tau$ be a stoping time adapted to the filtration $\mathcal{F}_n
  = \sigma(X_0,X_1,X_2,\dots,X_n)$ with $\E \tau < \infty$. 
Let $S_\epsilon=\inf\{ n: X_n \neq X_n^\epsilon\}$ where $(X_n,X_n^\epsilon)$
is  the coupled version of the process given in Theorem~\ref{thm:avgProb}. If we assume that
$X_0=X_0^\epsilon$ then 
\begin{align*}
  \PP( S_\epsilon \leq \tau ) \leq \epsilon\,\E\tau
\end{align*}
\end{theorem}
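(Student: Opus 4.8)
The plan is to exploit the one-step structure of the coupling constructed in Section~\ref{sec:buildingTheCoupling}: while the two chains are coupled (i.e. $X_n = X_n^\epsilon$), the next update is a maximal coupling of $\P(X_n,\ccdot)$ and $\P_\epsilon(X_n,\ccdot)$, so the conditional probability that they decouple at the next step equals $\|\P(X_n,\ccdot)-\P_\epsilon(X_n,\ccdot)\|_{TV}$, which Assumption~\ref{a:localApprox} bounds by $\epsilon$. Write $\mathcal{G}_n = \sigma\big((X_0,X_0^\epsilon),\dots,(X_n,X_n^\epsilon)\big)$ for the filtration of the coupled process. Since $X_n$ is itself a copy of the $\P$-chain, $\mathcal{F}_n \subseteq \mathcal{G}_n$, hence $\tau$ is also a stopping time for $(\mathcal{G}_n)$ and $\{\tau > n\}\in\mathcal{G}_n$. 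The key estimate I would record is
\begin{align*}
\PP\big(S_\epsilon = n+1 \mid \mathcal{G}_n\big) \;\le\; \epsilon\,\one_{\{S_\epsilon > n\}} \qquad \text{a.s., for every } n \ge 0,
\end{align*}
which is trivial on $\{S_\epsilon \le n\}$ (there $\{S_\epsilon = n+1\}$ is empty) and on $\{S_\epsilon > n\}$ follows from the maximal-coupling property above together with Assumption~\ref{a:localApprox}.

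Granting this, the rest is a short summation. Because $X_0 = X_0^\epsilon$ forces $S_\epsilon \ge 1$, and because $\tau < \infty$ a.s. (as $\E\tau < \infty$), the event $\{S_\epsilon \le \tau\}$ is, up to a null set, the disjoint union over $n \ge 0$ of $\{S_\epsilon = n+1,\ \tau \ge n+1\}$. For each term, using $\{\tau \ge n+1\} = \{\tau > n\}\in\mathcal{G}_n$ and $\{S_\epsilon > n\}\in\mathcal{G}_n$,
\begin{align*}
\PP\big(S_\epsilon = n+1,\ \tau > n\big) = \E\big[\one_{\{\tau > n\}}\,\PP(S_\epsilon = n+1 \mid \mathcal{G}_n)\big] \le \epsilon\,\E\big[\one_{\{\tau > n\}}\one_{\{S_\epsilon > n\}}\big] \le \epsilon\,\PP(\tau > n).
\end{align*}
Summing over $n \ge 0$ and using $\sum_{n\ge 0}\PP(\tau > n) = \E\tau$ yields $\PP(S_\epsilon \le \tau) \le \epsilon\,\E\tau$. (An equivalent route is to note that $M_n = \one_{\{S_\epsilon \le n\}} - \epsilon\,(S_\epsilon \wedge n)$ is a $\mathcal{G}_n$-supermartingale started from $0$, apply optional stopping at $\tau\wedge n$, and let $n\to\infty$ by monotone convergence, using $\E\tau<\infty$.)

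The only genuinely delicate point is the justification of the displayed one-step estimate: one must pin down that the coupling of Theorem~\ref{thm:avgProb} performs a maximal coupling of the two kernels at every step on which the chains currently agree. This is a property of the explicit construction in Section~\ref{sec:buildingTheCoupling} rather than of the abstract statement, so I would make sure that construction records it cleanly (including the measurability of $x\mapsto\|\P(x,\ccdot)-\P_\epsilon(x,\ccdot)\|_{TV}$ and the conditional-independence structure of the update). Everything else — the inclusion $\mathcal{F}_n\subseteq\mathcal{G}_n$, the disjoint decomposition of $\{S_\epsilon\le\tau\}$, and the summation — is routine.
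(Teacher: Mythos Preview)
Your argument is correct. The key one-step estimate $\PP(S_\epsilon=n+1\mid\mathcal G_n)\le\epsilon\,\one_{\{S_\epsilon>n\}}$ follows directly from the construction in Section~\ref{sec:buildingTheCoupling}: on the diagonal $\xi_1=\xi_2$ the decoupling probability is $1-\rho_\epsilon(\xi)=\|\P_\epsilon(\xi_1,\ccdot)-\P(\xi_1,\ccdot)\|_{TV}\le\epsilon$. The subsequent conditioning and summation are clean.

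The paper takes a different route. It passes first to the dominating two-state chain $Y_n$ (with $\alpha=0$ allowed, so that state~$1$ is absorbing), lets $\sigma_\epsilon=\inf\{n:Y_n=1\}$, uses the stochastic ordering to get $\PP(S_\epsilon\le\tau)\le\PP(\sigma_\epsilon\le\tau)$, and then computes $\PP(\sigma_\epsilon>\tau)=\E(1-\epsilon)^\tau$ before bounding this below by $1-\epsilon\,\E\tau$ via a first-order Taylor expansion. Your approach is more elementary: it stays with the coupled process itself, uses only the conditional inequality, and avoids both the auxiliary $Y$-chain and the generating-function detour. The paper's route, on the other hand, yields the intermediate and slightly sharper bound $\PP(S_\epsilon\le\tau)\le 1-\E(1-\epsilon)^\tau$, and it fits the paper's overall strategy of reducing everything to the explicit two-state model. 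A minor advantage of your version is that it never needs the independence-type factorisation $\PP(\sigma_\epsilon>\tau)=\sum_k(1-\epsilon)^k\PP(\tau=k)$, whose justification the paper leaves implicit.
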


\subsection{Using the Coupling}
\label{sec:usingCoup}

We now use the results of the previous section to prove
Theorem~\ref{cor:invAvg} and Theorem~\ref{thm:Path}. In all cases the
idea is similar: use the fact that the two processes have been coupled
to agree often. For the statements in  Theorem~\ref{cor:invAvg}, we
use that they are equal for a controllable fraction of the
time. For Theorem~\ref{thm:Path}, we use that they are 
typically equal on a long interval of time if they agree initially.
\begin{proof}[Proof of Theorem~\ref{cor:invAvg} ]
We begin by proving the first statment.  Observe that 
    \begin{multline*}
   \frac12 \E\Big[ \frac1n \sum_{k=0}^{n-1} f(X_k) - \frac1n 
      \sum_{k=0}^{n-1} f(X_k^\epsilon)\Big] \leq \|f\|_\infty  \frac1n 
          \sum_{k=0}^{n-1} \PP(X_k \not = X_k^\epsilon)\\        
       \leq  \frac{\epsilon}{\alpha+\epsilon}+\frac{1-(1-\alpha-\epsilon)^n}{n(\alpha+\epsilon)^2}
    \big((\alpha+\epsilon) \PP\{X_0 \neq 
               X_0^\epsilon\} -\epsilon\big)
  \end{multline*}
where the last estimate comes from Theorem~\ref{thm:almostSure}. 
Since the above expression is true for any choice of couplings of the 
initial conditions 
$X_0$ and $X_0^\epsilon$, we are 
free to minimize over all such couplings. The Monge-Kantorovich 
Theorem states that $ \|\nu_1-\nu_2\|_{TV} =\inf \PP\{X_0 \neq 
               X_0^\epsilon\}$, where the infimum is taken over all 
               couplings  with marginals $\nu_1$ and $\nu_2$.  This
               produces the right hand side of the bound, while taking
               the supremum over all $f$ with $\|f\|_\infty \leq 1$
               produces the total variation norm on the left
               hand side. This completes the first statement.
The third statement follows from the estimate 
\begin{align*}
   \E\Big(\frac1n \sum_{k=0}^{n-1}    f(X_k)
     &- \frac1n \sum_{k=0}^{n-1}    f(X_k^\epsilon)
    \Big)^2 \leq  \E\Big(\frac1n \sum_{k=0}^{n-1}    [f(X_k)
     -   f(X_k^\epsilon)]\one_{\{X_k \neq X_k^\epsilon\}}  \Big)^2 \\&\leq 2\|f\|_\infty^2 \E\Big(\frac1n \sum_{k=0}^{n-1}  \one_{\{X_k \neq X_k^\epsilon\}} 
    \Big)^2 \\&\leq 4\|f\|_\infty^2 \Big[\frac{\epsilon^2}{(\alpha+\epsilon)^2}+ \E\Big(\big[\frac1n
                                                                       \sum_{k=0}^{n-1}
                \one_{\{X_k \neq X_k^\epsilon\}}
                -\frac\epsilon{\alpha+\epsilon}\big]^+ \Big)^2 \Big]
\end{align*}
and from Theorem~\ref{thm:almostSure}. The almost sure statment and  exponential estimate
follow similarly, also from Theorem~\ref{thm:almostSure}.
\end{proof}

The proof of Theorem~\ref{thm:Path} is very similar.
\begin{proof}[Proof of Theorem~\ref{thm:Path}] For any bounded
  function $f\colon \R \rightarrow \R$, we have
  \begin{align*}
   \E \big[f( g(X)) - f(g(X^\epsilon)) \big]&=  \E \big[f( g(X)) -
    f(g(X^\epsilon)) \big]\big[\one_{\{\tau < S_\epsilon\}}+
    \one_{\{\tau  \geq S_\epsilon\}}\big]\\&= \E \big[f( g(X)) -
    f(g(X^\epsilon)) \big] \one_{\{\tau  \geq S_\epsilon\}}\leq
                                             2\|f\|_\infty \P(\tau  \geq S_\epsilon)
  \end{align*}
Where $S_\epsilon$ is the decoupling time defined in the statement of 
Theorem~\ref{thm:deCoupling}.
The result now follows from Theorem~\ref{thm:deCoupling} on the
decoupling time $S_\epsilon$.
\end{proof}

\subsection{Construction of the Coupling}\label{sec:buildingTheCoupling}

Given any two probability measures $m_1$ and $m_2$ on $\X$, one can always
write them as a density relative to a common probability
measure $m$, namely $m=\frac12(m_1+m_2)$. If $m_i(dx) = f_i(x) m(dx)$
for $i=1,2$ for some $f_1,f_2 \in L^1(\X,m)$, we then define the
measures $m_1 \wedge m_2$  and $[m_1-m_2]^+$ by $(m_1 \wedge m_2)(dx)
= (f_1 \wedge f_2)(x) m(dx)$ and  $[m_1-m_2]^+(dx) = [f_1(x)
-f_2(x)]^+ m(dx)$ respectively. It is not hard to see that
\begin{align*}
  \|m_1 - m_s\|_{TV} = 1- (m_1\wedge m_2)(\X) = [m_1-m_2]^+(\X)  =  [m_2-m_1]^+(\X).
\end{align*}

We define the following measures on  $\X$
which will be used to construct our coupling. For any
$\xi=(\xi_1,\xi_2) \in \X \times \X$, we define
\begin{align*}
  Q_\epsilon(\xi, \ccdot) = &\frac{\P_\epsilon (\xi_1,\ccdot) \wedge
  \P(\xi_2,\ccdot)}{\rho_\epsilon(\xi)}\\ R_\epsilon(\xi,\ccdot) = \frac{[\P_\epsilon (\xi_1,\ccdot) -
  \P(\xi_2,\ccdot)]^+}{1-\rho_\epsilon(\xi)}&\qquad \widetilde R_\epsilon(\xi,\ccdot) = \frac{[
  \P(\xi_2,\ccdot) -\P_\epsilon (\xi_1,\ccdot)]^+}{1-\rho_\epsilon(\xi)} \,  ,
\end{align*}
where $\rho_\epsilon(\xi) = 1- \|\P_\epsilon( \xi_1,\ccdot) -
\P(\xi_2,\ccdot)\|_{TV}$. By the preceding observations these are
all probability measures on $\X$ for fixed $\xi \in \X \times \X$. Now define the following
transition kernels in $\X\times \X$ for $\xi=(\xi_1,\xi_2)$ and
$x=(x_1,x_2)$ in $\X\times \X$ by 
\begin{align*}
  \Q_\epsilon(\xi,dx) = \rho_\epsilon(\xi) Q_\epsilon(\xi, dx_1) \delta_{x_1}(dx_2)  +
  \big(1-\rho_\epsilon(\xi)\big) \big( R_\epsilon(\xi,dx_1) \times \widetilde R_\epsilon (\xi,dx_2)\big)\,.
\end{align*}
Observe that the marginals of $\Q_\epsilon(\xi,dx)$ are respectively
$\P_\epsilon (\xi_1,\ccdot)$  and $\P(\xi_2,\ccdot)$.

Notice that under Assumptions \ref{a:localApprox} and
\ref{a:crossDoeblin},  this construction has the following
properties. If $\chi=(\chi_1,\chi_2)$  is distributed  according to
$\Q_\epsilon(\xi,\ccdot)$ then $P(\chi_1 = \chi_2) = \rho_\epsilon(\xi)$ and 
\begin{align*}
\rho_\epsilon(\xi) \geq
  \begin{cases}
1-\epsilon \qquad & \text{if $\xi_1 = \xi_2$}\\
\alpha & \text{if $\xi_1 \not = \xi_2$}\\
      \end{cases}
\end{align*}

Letting $\chi_n= ( \chi_n^{(1)},\chi_n^{(2)})$  be the  Markov
chain on $\X \times \X$  defined by the transition density
$\Q_\epsilon$, we  define  the stochastic process $Z_n^\epsilon$ by
\begin{align*}
  Z_n^\epsilon =
  \begin{cases}
0 & \quad\text{if } \chi_n^{(1)} =     \chi_n^{(2)}\\
1 & \quad \text{if } \chi_n^{(1)} \not =     \chi_n^{(2)}\\
  \end{cases}
\end{align*}
While $Z_n^\epsilon$ is not Markovian, we can define the random quantities 
$\PP( Z_{n+1}^\epsilon =k  \,|\, Z_n^\epsilon=j)$
by $\E (\1\{Z_{n+1^\epsilon} =k\}  \,|\, Z_n^\epsilon=j)$. Now observe that with probability
one 
\begin{align*}
\PP( Z_{n+1}^\epsilon =0  \,|\, Z_n^\epsilon=0) \geq1- \epsilon \quad\text{and}\quad
\PP( Z_{n+1}^\epsilon =0  \,|\, Z_n^\epsilon=1) \geq \alpha\,.
\end{align*}
Let $Y_n$ be the Markov chain on $\{0,1\}$ with the transition
matrix
\begin{align}
P_\epsilon=\begin{pmatrix}
  1 - \epsilon & \epsilon \\
\alpha & 1- \alpha
\end{pmatrix}  \,.
\end{align}
Assuming that $\epsilon < 1-\alpha$, we have that  with probability one
\begin{align*}
  \PP( Z_{n+1}^\epsilon=0 \,|\,  Z_n^\epsilon=0) \geq&  \PP( Y_{n+1} =0  \,|\, Y_n =0) =
                                  1-\epsilon\\
  \PP( Z_{n+1}^\epsilon =0  \,|\,  Z_n^\epsilon=1) \geq&  \PP( Y_{n+1} =0  \,|\, Y_n =1) =\alpha\\
  \PP( Z_{n+1}^\epsilon =0  \,|\,  Z_n^\epsilon=0) \geq&  \PP( Y_{n+1} =0  \,|\, Y_n =1) = \alpha\,.
\end{align*}
Either directly from these estimates or  from the fact that they imply
that 
\begin{align*}
  \PP( Z_{n+1}^\epsilon \leq k  \,|\, Z_n^\epsilon \leq Y_n )  \geq   \PP( Y_{n+1} \leq k  \,|\, Z_n^\epsilon \leq Y_n )
\end{align*}
for all $k \geq 0$ and $n \geq 0$, which is the assumption of classical 
stochastic dominance theorems, it is clear that one can
construct a monotone coupling of the 
processes  $Y_n$ and $Z_n^\epsilon$.  That is, we can
construct copies of $Y_n$ and $Z_n^\epsilon$ 
on the same probability space such that
\begin{align}\label{eq:ZYordering}
  \PP(  Z_{n}^\epsilon \leq Y_n \text{ for all $n$}) =1 
\end{align}
provided $Z_0^\epsilon \leq Y_0$. In particular, this implies
that with probability one
\begin{align}\label{eq:ZYorderingImplied}
  \frac1n \sum_{k=0}^{n-1} \one_{\{X_k \neq X_k^\epsilon\}}  = \frac1n
  \sum_{k=0}^{n-1} \one_{\{ Z_{k}^\epsilon=1 \}}  \leq \frac1n
  \sum_{k=0}^{n-1}\one_{\{ Y_{k}=1 \}}  =  \frac1n
  \sum_{k=0}^{n-1}\phi( Y_{k})\,.
\end{align}
Hence to control the fraction of time $X_n$ and $X_n^\epsilon$ disagree
it is enough to bound the amount of time $Y_n=1$. The
analysis of this bounding chain is the topic of the next section.

\subsection{Analysis of Bounding Chain}\label{sec:Analysis1}
We now give the proofs of the estimates in
Theorem~\ref{cor:invAvg}. The basic idea is to use the fact that
$Z_n^\epsilon$ is stochastically dominated by $Y_n$ in the
sense of \eqref{eq:ZYordering} and \eqref{eq:ZYorderingImplied}, to
reduce all questions of interest to statements about the time that
$Y_n$ spends in state 1. Since $Y_n$ is a simple two
state Markov chain, the analysis is elementary and quite explicit.
\subsubsection{Control in Expectation}
\label{sec:EBound}
The Markov transition matrix of the bounding chain is 
\begin{align}
\P_\epsilon=\begin{pmatrix}
  1 - \epsilon & \epsilon \\
\alpha & 1- \alpha
\end{pmatrix}  \,.
\end{align}
It has generator $L_\epsilon=\P_\epsilon-I$ and unique stationary measure $\mu_\epsilon$ given by
\begin{align}
  \mu_\epsilon =
  \begin{pmatrix}
    \frac{\alpha}{\alpha +\epsilon}& \frac{\epsilon}{\alpha+\epsilon}
  \end{pmatrix}
\end{align}
and satisfies by definition $\mu_\epsilon L_\epsilon = 0$ and $\mu_\epsilon
\P_\epsilon = \mu_\epsilon$.

We define the following vectors
\begin{align*}
  \phi =
  \begin{pmatrix}
    0\\ 1
  \end{pmatrix},
\quad 
1 = \begin{pmatrix}
    1\\ 1
  \end{pmatrix},
\quad \bar \phi_\epsilon = \mu_\epsilon \phi 1 = \begin{pmatrix}
    \frac\epsilon{\alpha + \epsilon}\\  \frac\epsilon{\alpha + \epsilon}
  \end{pmatrix}
\quad \text{and}\quad \widetilde \phi_\epsilon = \phi- \bar \phi_\epsilon
=
\begin{pmatrix}
  \frac{-\epsilon}{\alpha+\epsilon}\\ \frac\alpha{\alpha+\epsilon}
\end{pmatrix}
\end{align*}

and further define $\psi_\epsilon$ as the solution to the equation
\begin{align}
  L_\epsilon \psi_\epsilon = - \widetilde \phi_\epsilon
\end{align}
It is straightforward to see that 
\begin{align}
  \psi_\epsilon = \sum_{k=0}^\infty \P_\epsilon^k \widetilde \phi_\epsilon
\end{align}
Observe that $w_\epsilon$, defined by
\begin{align*}
  w_\epsilon =
  \begin{pmatrix}
    -\dfrac\epsilon\alpha\\1
  \end{pmatrix}\,,
\end{align*}
satisfies $\P_\epsilon w_\epsilon = (1-\epsilon-\alpha)w_\epsilon$ and
hence $w_\epsilon$ is right-eigenvector with eigenvalue
$1-\epsilon-\alpha$. Since  $\widetilde \phi_\epsilon =  \frac{\alpha}{\alpha +\epsilon} w_\epsilon$,
we have that
\begin{align*}
  \psi_\epsilon = \Big( \frac{\alpha}{\alpha +\epsilon}\Big)\Big(
  \sum_{k=0}^\infty (1-\epsilon-\alpha)^k \Big) w_\epsilon =
  \frac{\alpha}{(\alpha +\epsilon)^2} w_\epsilon
\end{align*}
where we have again used the fact that $\epsilon < 1- \alpha$ so that
$1-\epsilon-\alpha \in (0,1)$.
Any initial distribution of $(X_0,X_0^\epsilon)$ induced an
initial distribution  $\nu$  for the $Y_n$ chain by $\nu(0) = \PP(
X_0=X_0^\epsilon)$ and $\nu(1) = \PP(
X_0\not=X_0^\epsilon)$.

Combining the above properties we have that
\begin{equation*}
  \label{eq:NotEqProb}
  \nu \P_\epsilon^n \psi_\epsilon - \nu \psi_\epsilon =  \sum_{k=0}^{n-1} \nu \P^k_\epsilon
  L_\epsilon \psi_\epsilon=  \sum_{k=0}^{n-1} \nu \P^k_\epsilon
 \phi - n \nu \bar \phi 
\end{equation*}
Rearranging this produces 
\begin{align*}
\frac1n \sum_{k=0}^{n-1} \nu \P_\epsilon^k \phi &= \frac{\epsilon}{\alpha+\epsilon} + \frac{\nu \P_\epsilon^n \psi_\epsilon - \nu \psi_\epsilon}{n} = \frac{\epsilon}{\alpha+\epsilon} + \frac{\alpha}{n(\alpha+\epsilon)^2} (1-(1-\alpha-\epsilon)^n) \nu w_\epsilon  \\
&= \frac{\epsilon}{\alpha+\epsilon} + \frac{1-(1-\alpha-\epsilon)^n}{n(\alpha+\epsilon)^2} \left(  \alpha \mathbf P(X_0 \ne X_0^\epsilon) -\epsilon(1- \mathbf P(X_0 \ne X_0^\epsilon)) \right) 
\end{align*}
Since $\frac1n \sum_{k=0}^{n-1} \mathbf P(X_k \neq X_k^\epsilon) \leq \frac1n
\sum_{k=0}^{n-1}  \nu \P_\epsilon^k \phi$, 
the preceding calculation proves
Theorem~\ref{thm:avgProb} after some algebra.

\subsubsection{Almost Sure Analysis and Variance}\label{sec:Analysis2}
\label{sec:ASBound}
Let $(X_n, X_n^\epsilon)$ be the coupled versions of the chains
constructed in the previous section and let $Z_n^\epsilon$ and $Y_n$ be the
associated processes on $\{0,1\}$ also constructed in the previous sections.
We now introduce slight abuse of
notation by allowing $\phi$, $\phi_\epsilon$, and $\psi_\epsilon$ to
denote the associated real valued functions on $\{0,1\}$. For example 
\begin{align*}
  \psi_\epsilon(y) =
  \begin{cases}
    - \frac{\epsilon}{(\alpha+\epsilon)^2} \quad& \text{if }y=0\\
     \frac{\alpha}{(\alpha+\epsilon)^2} \quad&\text{if } y=1
  \end{cases}
\end{align*}
Letting $\mathcal{F}_n$ be the filtration generated by
$(Y_0,Y_1,\cdots,Y_n)$,
define the Martingale increment
\begin{align*}
  I_n = \psi_\epsilon(Y_n)  -  \E(\psi_\epsilon(Y_n) | \mathcal{F}_{n-1})
\end{align*}
and the Martingale 
\begin{align*}
  M_n =\sum_{k=1}^n I_k
\end{align*}
with $M_0=0$.

Now since $\E(\psi_\epsilon(Y_n)- \psi_\epsilon(Y_{n-1})\,|\, \mathcal{F}_{n-1}) =
(L\psi_\epsilon)(Y_{n-1})$ we have
\begin{align*}
  \psi_\epsilon(Y_n) - \psi_\epsilon(Y_0) = \sum_{k=0}^{n-1} (L\psi_\epsilon)(Y_k) + M_n 
\end{align*}
and
\begin{align}\label{eq:asBound}
   \frac1n\sum_{k=0}^{n-1} \phi(Y_k) - \frac{\epsilon}{\alpha+\epsilon}&=  \frac1n\sum_{k=0}^{n-1} (L\psi_\epsilon)(Y_k) =
  \frac{\psi_\epsilon(Y_0) -\psi_\epsilon(Y_n) }{n} - \frac{M_n}{n} 
\end{align}
Next observe that 
\begin{align}
  \label{eq:phiBound}
- \frac{1}{\alpha+\epsilon}\1_{\{X_0 = X_0^\epsilon\}} \leq
  {\psi_\epsilon(Y_0) -\psi_\epsilon(Y_n) }& \leq
                                          \frac{1}{\alpha+\epsilon}\1_{\{X_0 \neq X_0^\epsilon\}}\,.
\end{align}

Since $|M_{n}- M_{n-1}| \leq \frac1{\alpha+\epsilon}$, we have by Azuma's inequality 
\begin{align*}
  \PP( |M_n| \geq \lambda \sqrt{n} ) \leq  2 e^{- \frac{(\alpha+\epsilon)^2}{2}\lambda^2}\,.
\end{align*}
Taking $\lambda= 2\sqrt{\log(n)}$ and using the Borel-Cantelli lemma
shows that there exists a random constant $K$ so that 
\begin{align*}
  |M_n| \leq K + 2 \sqrt{n \log(n)}
\end{align*}
for all $n \geq 0$.
Combining these estimates produces the following result, which shows that
under Assumptions \ref{a:doeblin} and \ref{a:localApprox},  for any
$\epsilon \in (0,\alpha]$ and initial conditions $X_0$ and
$X_0^\epsilon$ there exists a random,  postitive constant $K$, such that
      with probability one
      \begin{align*}
 -\frac{\one_{X_0= X_0^\epsilon}}{n(\alpha+\epsilon)} -
    \frac{K}{n} - 2\sqrt{\frac{\log(n)}{n}}\leq 
   \frac1n\sum_{k=0}^{n-1} \phi(Y_k) -\frac{\alpha}{\alpha+\epsilon} \leq 
    \frac{\one_{X_0\neq X_0^\epsilon}}{n(\alpha+\epsilon)} +
    \frac{K}{n} + 2\sqrt{\frac{\log(n)}{n}}         
      \end{align*}
for all $n \geq 0$. In addition for any $\lambda >0$ and $n >0$ one has
  \begin{align*}
    \PP\Big(  \frac1n\sum_{k=0}^{n-1} \phi(Y_k) -
    \frac{\epsilon}{\alpha+\epsilon}\geq  \frac{\one_{\{X_0\neq X_0^\epsilon\}}}{n(\alpha+\epsilon)}  + \frac{\lambda}{\sqrt{n}  } \Big)\leq e^{-\frac{(\alpha+\epsilon)^2}2\lambda^2}
  \end{align*}
Recalling that $\frac1n\sum_{k=0}^{n-1} \one_{\{X_0\neq
  X_0^\epsilon\}} \leq \frac1n\sum_{k=0}^{n-1} \phi(Y_k)$, produces
the first and last results given in Theorem~\ref{thm:almostSure}. To
see the second result we return to \eqref{eq:asBound}.  We use the fact that for $\epsilon \in (0,\alpha]$
\begin{align*}
 \E|M_n|^2 = \sum_{k=1}^n \E|I_k|^2 \leq  n  \frac{\alpha^2}{(\alpha+\epsilon)^4}
\end{align*}
to see that if $\epsilon \in(0,\alpha)$
then
\begin{align*}
 \E \Big| \frac1n\sum_{k=0}^{n-1} \phi(Y_k) -
  \frac{\epsilon}{\alpha+\epsilon}\Big|^2  \leq \frac{2 \E |  {\psi_\epsilon(Y_0) -\psi_\epsilon(Y_n) }|^2}{n^2} + \frac{2  \E|M_n |^2}{n^2}
 \leq 
  \frac{2}{n^2}\frac1{(\alpha+\epsilon)^2} + \frac2n\frac{\alpha^2}{(\alpha+\epsilon)^4}
\end{align*}
With this, the proof of last estimate from Theorem~\ref{thm:almostSure} is completed by
observing that
\begin{align*}
 \E \Big(\big[ \frac1n\sum_{k=0}^{n-1} \one_{\{X_k\neq X^\epsilon_k\}} -
  \frac{\epsilon}{\alpha+\epsilon}\big]^+\Big)^2\leq \E \Big(\big[ \frac1n\sum_{k=0}^{n-1} \phi(Y_k) -
  \frac{\epsilon}{\alpha+\epsilon}\big]^+\Big)^2\leq \E \Big| \frac1n\sum_{k=0}^{n-1} \phi(Y_k) -
  \frac{\epsilon}{\alpha+\epsilon}\Big|^2
\end{align*}

\subsection{Analysis of Decoupling Time}
\label{sec:decoouling-time}

Let $S_\epsilon =\inf(n: X_n\neq X_n^\epsilon)$ and let
$\sigma_\epsilon = \inf\{n : Y_n=1\}$ where $Y_n$ is the 0-1 Markov
process constructed in the previous section. Notice that the
construction is still possible if $\alpha=0$. In this case the state 1
is an absorbing state for the $Y_n$ chain, but the stochastic
ordering still holds.

Because of the stochastic
ordering $S_\epsilon \geq \sigma_\epsilon$. Hence for any stoping time
$\tau$,
\begin{align*}
  \PP( S_\epsilon \leq \tau) \leq \PP(\sigma_\epsilon \leq \tau)
\end{align*}

Now, 
\begin{align*}
   \PP(\sigma_\epsilon > \tau)= \sum_{k=0}^\infty  \PP(\sigma_\epsilon
  > k )\PP(\tau =k) =  \sum_{k=0}^\infty  (1-\epsilon)^k\PP(\tau =k) =
  \E (1-\epsilon)^\tau\, .
\end{align*}
Setting $\Lambda(\epsilon) =  \E (1-\epsilon)^\tau$, if we temporarily  assume
that $\tau \leq N$ almost surely for some constant $N$ then
$\Lambda(\epsilon)$ is an everywhere differentiable function. Expanding
around $0$ for $\epsilon>0$ and using the Lagrange remainder term produces 
\begin{align*}
  \Lambda(\epsilon) = 1 - \epsilon \E \tau + \tfrac12 c^2 \E ( \tau(\tau -1) )
  \geq 1 - \epsilon \E \tau 
\end{align*}
for some $c \in [0, \epsilon]$.  Since both the right and left hand
side are well defined for any $\epsilon \in (0,1)$, when the stopping time only
satisfies $\E\tau <\infty$ (rather than the almost sure bound $\tau <N$), we
conclude that 
\begin{align*}
 \PP(\sigma_\epsilon > \tau) \geq 1 - \epsilon \E \tau 
\end{align*}
in general. Since $ \PP(\sigma_\epsilon \leq \tau)  = 1 -
\PP(\sigma_\epsilon > \tau) $ the result is proven.

\section{Sharpness}\label{sec:Sharp}

Now we show that the total variation bound in Theorem~\ref{cor:invAvg} is tight by exhibiting a Markov chain satisfying the assumptions that achieves the bound. Let
\begin{align*}
 \P = \begin{pmatrix} 1-\beta & \beta \\ \beta & 1-\beta \end{pmatrix}
\end{align*}
for $\beta \le 1/2$. It is easy to verify by direct calculation that the invariant measure is $\mu = \begin{pmatrix} \frac12 & \frac12 \end{pmatrix}$ and $\P$ satisfies the Doeblin condition with $a = 2\beta$. $\P$ has eigenvectors
\begin{align*}
\phi_1 = (\tfrac12,\tfrac12), \quad \phi_2 = (-\tfrac12,\tfrac12)
\end{align*}
with eigenvalues $1$ and $1-2\beta$, respectively. Any possible starting measure $\nu$ can be expressed as $\nu_{\gamma} = (\gamma,1-\gamma)$ for some $\gamma \le \tfrac12$. Then $\|\nu_{\gamma} - \mu\|_{TV} = \frac{1}{2}\left( |\tfrac12-\gamma| + |\tfrac12-(1-\gamma)| \right) = \frac{1}{2}-\gamma$ when $\gamma < \tfrac12$; note if $\gamma > \tfrac12$, we get $\gamma -\tfrac12$. 


Consider the perturbation
\begin{align*} 
 \P_{\epsilon} = \begin{pmatrix} 1-(\beta-\epsilon) & \beta-\epsilon \\ \beta+\epsilon & 1-(\beta+\epsilon) \end{pmatrix},
\end{align*}
which satisfies $\sup_{x \in \mathcal X} \| \P_{\epsilon}(x,\cdot)- \P(x,\cdot)\|_{TV} = \epsilon$ and $\sup_{(x,y) \in \mathcal X \times \mathcal X} \|\P_\epsilon(x,\cdot)-\P(y,\cdot)\|_{TV} < 1-(2\beta-\epsilon) = 1-\alpha$. A diagonalization of $\P_\epsilon$ is
\begin{align*}
D_\epsilon &= Q_\epsilon^{-1} \P_\epsilon Q_\epsilon = \frac{1}{2a} \begin{pmatrix} \beta+\epsilon & \beta-\epsilon \\ -1 & 1 \end{pmatrix} \begin{pmatrix} 1 & 0 \\ 0 & 1-2\beta \end{pmatrix}  \begin{pmatrix} 1 & -(\beta-\epsilon) \\ 1 & \beta+\epsilon \end{pmatrix}
\end{align*}
(see \cite[pp 15-16]{lawler2006introduction} for example), so that
\begin{align*} 
\P^k_\epsilon &= \frac{1}{2\beta} \begin{pmatrix} (\beta+\epsilon)+(\beta-\epsilon)(1-2\beta)^k & (\beta-\epsilon)-(\beta-\epsilon)(1-2\beta)^k \\ (\beta+\epsilon) - (\beta+\epsilon) (1-2\beta)^k & (\beta-\epsilon) + (\beta+\epsilon)(1-2\beta)^k \end{pmatrix}.
\end{align*}
Therefore
\be
\nu_\gamma \P^k_\epsilon 
&= \tfrac{1}{2\beta} \begin{pmatrix} (\epsilon + \beta )+(\beta(2\gamma - 1) - \epsilon)(1-2\beta)^k & (\beta-\epsilon) + (\beta(1-2\gamma)+\epsilon) (1-2\beta)^k \end{pmatrix}
\ee
so 
\begin{align*}
\frac1n\sum_{k=0}^{n-1} &\nu_\gamma \P^k_\epsilon = \tfrac{1}{2\beta} \begin{pmatrix} (\epsilon + \beta )+(\beta(2\gamma - 1) - \epsilon)\frac{1-(1-2\beta)^n}{2\beta n} & (\beta-\epsilon) + (\beta(1-2\gamma)+\epsilon) \frac{1-(1-2\beta)^n}{2\beta n} \end{pmatrix} \\
&= \begin{pmatrix} (\frac{\epsilon}{a} + \frac{1}{2} )+(\| \mu - \nu_\gamma\|_{TV}  - \frac{\epsilon}{a})\frac{1-(1-a)^n}{a n} & (\frac{1}{2}-\frac{\epsilon}{a}) + (-\|\mu- \nu_\gamma\|_{TV} +\frac{\epsilon}{a}) \frac{1-(1-a)^n}{a n} \end{pmatrix}
\end{align*}
where in the last step we took $\gamma > \tfrac12$. Recalling that $\mu = \begin{pmatrix} \frac{1}{2}  & \frac{1}{2} \end{pmatrix}$ we get
\begin{align*}
\left\|\mu - \frac1n \sum_{k=0}^{n-1} \nu_\gamma \P^k_\epsilon \right\|_{TV} &= \frac{1}{2} \left| \frac{1}{2} - \left( (\frac{\epsilon}{a} + \frac{1}{2} )+( \| \mu - \nu_\gamma\|_{TV}  - \frac{\epsilon}{a})\frac{1-(1-a)^n}{a n} \right) \right| \\
&= \frac{\epsilon}{\alpha+\epsilon} + \big(\|\mu - \nu_\gamma\|_{TV} - \frac{\epsilon}{\alpha+\epsilon}\big) \frac{1-(1-\alpha-\epsilon)^n}{(\alpha+\epsilon) n} 
\end{align*}
 where $\alpha$ is the constant appearing in Assumption
 \ref{a:crossDoeblin}. For $\alpha>\epsilon$, Corollary
 \ref{cor:invAvg} gives precisely this expression if one takes $\nu_1 = \mu$ and $\nu_2 = \nu$.
Thus we conclude that the total variation bound in
Theorem~\ref{cor:invAvg} is sharp.

\section{Application to MCMC for Gaussian processes}\label{sec:MCMC}
In this section we consider an application to a simple Markov chain
Monte Carlo algorithm for sampling from the posterior distribution in
a Gaussian process model for Bayesian analysis of spatially indexed data. 
Gaussian processes are also
commonly employed in nonparametric regression. Our aim is to exhibit a
case where one can achieve $\epsilon \ll \alpha+\epsilon$, and
therefore an accurate approximation of $\P$ by $\P_\epsilon$ via
Theorem~\ref{cor:invAvg}, with large
computational advantage. We consider approximation of a matrix inverse
of the form \be \label{eq:LowRankInv} (I_n+c \Sigma)^{-1} \approx
(I_n+c \Lambda \Lambda')^{-1} = I-\Lambda (c^{-1} I_q +
\Lambda'\Lambda)^{-1} \Lambda' \ee with $\Lambda$ a $n \times q$
matrix with $n \ll q$ and $I_k$ is a $k \times k$ identity
matrix. This is a common approach to achieve computational
tractability for Gaussian process models, since it replaces a
non-parallel $\mathcal O(n^3)$ algorithm with a parallelizable
$\mathcal O(n^2 q)$ algorithm. We form $\Lambda$ using a partial
spectral decomposition. Algorithms for approximating partial spectral
decompositions without computing the full spectral decomposition, with
applications to Gaussian process models, are given in
\cite{banerjee2013efficient}. Our aim is to assess the accuracy in the
total variation metric of approximations to Markov transition kernels
$\P$ that result from utilizing approximations of the form
\eqref{eq:LowRankInv} to generate $\P_\epsilon$.

Consider a Gaussian process model with squared exponential (or
``radial basis'') kernel 
\begin{equation}
    \label{eq:GPmodel}
    \begin{aligned}
      z(w) &= x_3 f(w) + \epsilon, \quad \epsilon \sim N(0,x_3^2) \\
\cov(f(w_i),f(w_j)) &= x_2 \exp(-x_1 \|w_i - w_j\|^2_2).  
    \end{aligned}
\end{equation}
The
parameters of the model are
$x = (x_1,x_2,x_3) \in \mathbf R_+^3 = \X$, the positive orthant
in $\mathbf R^3$. The points $W = w_1,\ldots,w_n$ at which the process
is sampled are treated as fixed and known, and the observations of the
process at these points are denoted $z =
(z(w_1),\ldots,z(w_n))$. Bayesian inference on $x$ requires choice of
a prior distribution. A default choice is an inverse Gamma prior on
$x_3^2$ with parameters $\tfrac{a}2,\tfrac{b}2$ and density \be p(x_3^2 \mid
\tfrac{a}2,\tfrac{b}2) = \frac{b^a}{\Gamma(a)} (x_3^2)^{-\frac{a}{2}-1}
e^{-\frac{b}{2x_3^2}}.  \ee For $x_1$, it is common (see
e.g. \cite{lum2012spatial}) to discretize the parameter space for
$x_1$ to $m$ points. We also do this for $x_2$, which allows us to
numerically compute the transition matrix. Specifically, our prior has
\be (x_1,x_2) &\in \X_1 \times \X_2, \quad |\X_1| = |\X_2| = m, \ee
almost surely, where $|\X|$ is the cardinality of the finite set
$\X$. We place prior mass $m^{-2}$ on each atom, leading to the
unnormalized posterior \be p(x) \propto |x_3^2(I_n+x_2
\Sigma(x_1,W))|^{-1/2} e^{-\frac{1}{2x_3^2} z'(I+x_2
  \Sigma(x_1,W))^{-1} z} (x_3^2)^{-\frac{a}{2}-1}
e^{-\frac{b}{2x_3^2}}, \ee where $\Sigma(x_1,W)$ is a $n \times n$
symmetric, positive-definite matrix with entries
\begin{align*}
  \{\Sigma(x_1,W)\}_{ij} = e^{-x_1 \|w_i-w_j\|_2^2}\,.
\end{align*}

Integration over 
$x_3^2$ is available in closed form, leading to the likelihood for $z$
marginal of $x_3$
\be \label{eq:MLikGP}
L(z \mid x_1,x_2,W) \propto |I+x_2 \Sigma(x_1,W)|^{-\frac12} \{b+z'(I+x_2 \Sigma(x_1,W))^{-1} z \}^{-\frac{a+n}2}.
\ee
Because the priors on $x_1,x_2$ are discrete uniform on $\X_1, \X_2$, respectively, 
the posterior, which is the target distribution we want to sample, is proportional 
to \eqref{eq:MLikGP} at the support points $(x_1,x_2) \in \X_1 \times \X_2$. We 
assess properties of the kernel $\P$ defined by the update rule
\be
r(y_2,x_1) &= \mathbf P(X_2 = y_2 \mid z, W, X_1=x_1) = \frac{L(z \mid x_1,y_2,W)}{\sum_{y_2^* \in \mathcal X_2} L(z \mid x_1, y_2^*,W)} \\
s(y_1,x_2) &= \mathbf P(X_1 = y_1 \mid z, W, X_2=x_2) = \frac{L(z \mid y_1,x_2,W)}{\sum_{y_1^* \in \mathcal X_1} L(z \mid y_1^*, x_2,W)}, 
\ee
which has invariant measure the posterior,
and an approximating kernel $\P_\epsilon$ that uses the same two-step update rule, 
but substitutes a low-rank approximation $\Sigma_\epsilon(x_1,W) = 
\Lambda_\epsilon(x_1,W) \Lambda_\epsilon(x_1,W)'$ where $\Lambda_\epsilon(x_1,W)$ 
is a $n \times q_\epsilon$ matrix, with $q_\epsilon \le n$ as in \eqref{eq:LowRankInv}. 
This is variously known as ``predictive process'' or ``subset of regressors,'' 
and is a common strategy for scaling computation in these models 
(see \cite{banerjee2013efficient, banerjee2008gaussian} and references therein). 

Observe that
\be
|\P((x_1,x_2),(y_1,y_2))-&\P_\epsilon((x_1^*,x_2^*),(y_1,y_2))| = |\P((x_1,\ccdot),(y_1,y_2))-\P_\epsilon((x_1^*,\ccdot),(y_1,y_2))| \\
&= |r(y_2,x_1)s(y_1,y_2) - r_\epsilon(y_2,x_1^*)s_\epsilon(y_1,y_2)| \\
&\equiv \alpha(x_1,x_1^*,y_1,y_2)
\ee
so that
\be
\sup_{x_1,x_1^*} \|\P((x_1,\ccdot),\ccdot)-\P_\epsilon((x_1^*,\ccdot),\ccdot) \|_{TV} = \frac{1}{2} \sup_{x_1,x_1^*} \sum_{y_1} \sum_{y_2} \alpha(x_1,x_2^*,y_1,y_2),
\ee
and the value of $\alpha$ in Assumption \ref{a:crossDoeblin} and $\epsilon$ in Assumption \ref{a:localApprox} can be computed exactly. 

To evaluate the practical usefulness of $\P_\epsilon$, we take $n=1000$, 
$w_i = \frac{i}{n}$, then take 100 independent samples from \eqref{eq:GPmodel} 
with 
\be
x_2 &= 0.9, \quad x_3^2 = 0.2, \quad x_1 = -\frac{\log(0.01)}{0.45} 
\ee
Here, $x_1$ is chosen such that $\Sigma(x_1,W) = 0.01$ when 
\be
\|w_i-w_j\|_2^2 = 0.45 \max_{i,j} \|w_i-w_j\|_2^2. 
\ee
Setting parameters such that the spatial correlation decays to 0.01 (or, more
generally, some small value) at distances equal to a specified fraction of the 
range of sampling points is typical in spatial statistics \cite{banerjee2014hierarchical}. 
For construction of $\P_\epsilon$ and $\P$, we put $m=10$ and 
\be
\X_1 &= \left\{\frac{-\log(0.01)}{0.95},\frac{-\log(0.01)}{0.85},\ldots,\frac{-\log(0.01)}{0.05} \right\} \\
\X_2 &= \{0.5,0.6,\ldots,1.4\}. 
\ee
For each sample from \eqref{eq:GPmodel}, we compute $\alpha$ and $\epsilon$ for values 
of $q_\epsilon$ between $1$ and $\min \{q_\epsilon : \epsilon < 10^{-10}\}$. 

Figure \ref{fig:EpsilonQ} shows results. The vertical axis shows $\frac{\epsilon}{\alpha+\epsilon}$ as a function of $q_\epsilon$. Recall from Theorem~\ref{cor:invAvg}, for example, that $\epsilon \ll \epsilon + \alpha$ results in high accuracy. The numerical simulation suggests one can achieve $\epsilon \ll \alpha+\epsilon$ with $q_\epsilon \ll n$, indicating that large computational gains from utilizing $\P_\epsilon$ are achievable. Indeed, $q_\epsilon=30$ is enough to have $\frac{\epsilon}{\alpha+\epsilon} < 10^{-4}$ for all 100 replicate simulations, thus allowing inversion of a $1000 \times 1000$ matrix required to compute $\P$ to be replaced by inversion of a $30 \times 30$ matrix in computing $\P_\epsilon$ while achieving an accurate approximation. When $n$ is large, dimension reduction of this magnitude can have a very large computational benefit, suggesting the practical value of this commonly used strategy, and allowing the bounds in this paper to be immediately applied to MCMC for Gaussian process models when the prior on $(x_1,x_2)$ is discrete.


\begin{figure}[h]
\centering
 \includegraphics[width=0.9\textwidth]{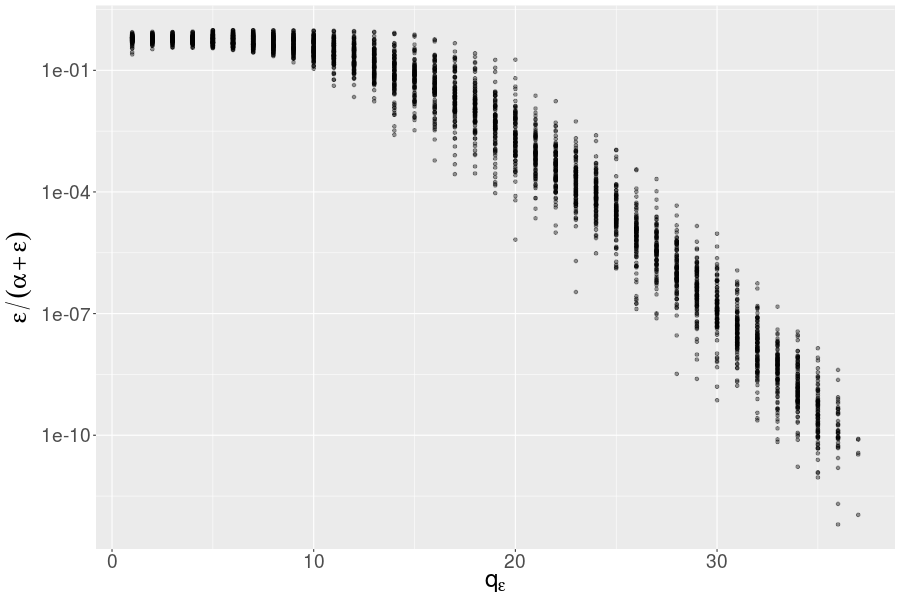}
 \caption{The value of $\frac{\epsilon}{\alpha+\epsilon}$, with $\alpha$ defined as in Assumption \ref{a:crossDoeblin}, as a function of the number of columns in the matrix $\Lambda(x_1,W)$ ($q_\epsilon$) for 100 independent samples from the model in \eqref{eq:GPmodel}. } \label{fig:EpsilonQ}
\end{figure}

\bibliography{closeChains}
\bibliographystyle{plain}

\appendix

\section{Proofs of basic results for $\P$ }
\label{sec:proofs-doebl-results}
 
Given any $f\colon \X \rightarrow \R$, consider the solution $\psi$ to 
the associated ``Poisson'' equation 
\begin{align}\label{eq:possion}
  L \psi = \mu f - f 
\end{align}
where $\mu$ is the unique invariant measure of $\P$, $\mu f= \int f 
d\mu$, and  $L$ is the generator of the Markov Chain defined by $L=\P-I$
where $I$ is the identity operator on $\X$. Recalling the definition 
of $|f|_*$ from \eqref{fstar}, we have the following result. 
\begin{lemma}\label{l:goodPoisson}
  If $f$ is bounded then there exits a unique solution $\psi$ of 
  \eqref{eq:possion}. Furthermore $|\psi|_\infty \leq 2|f|_*/a$. 
\end{lemma}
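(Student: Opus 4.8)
The plan is to produce $\psi$ explicitly as the resolvent series and read the bound off Theorem~\ref{thm:basicE}. Take the candidate
\[
  \psi = \sum_{k=0}^\infty \P^k(f - \mu f)\,,
\]
and the first job is uniform absolute convergence. The point is that $\P^k$ and $\mu$ both fix constants, so for every $\lambda \in \R$ one has $\P^k(f - \mu f)(x) = \big(\delta_x\P^k - \mu\big)(f - \lambda)$; bounding this by $2\,|f - \lambda|_\infty\,\|\delta_x\P^k - \mu\|_{TV}$ through the dual characterization of total variation recalled just after Theorem~\ref{thm:basicE}, then invoking Theorem~\ref{thm:basicE} with $\nu_1 = \delta_x$ and $\nu_2 = \mu$ (so that $\|\delta_x\P^k - \mu\|_{TV} \le (1-a)^k\|\delta_x - \mu\|_{TV} \le (1-a)^k$ since $\mu\P^k = \mu$), and finally taking the infimum over $\lambda$, gives $|\P^k(f - \mu f)(x)| \le 2(1-a)^k |f|_*$ uniformly in $x$. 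Summing the geometric series yields both the convergence and the estimate $|\psi|_\infty \le 2|f|_*/a$.

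Next I would verify that this $\psi$ solves \eqref{eq:possion}: since the partial sums converge uniformly and $\P$ is a contraction on bounded functions in the sup norm, one may apply $\P$ term by term, $\P\psi = \sum_{k\ge 0}\P^{k+1}(f - \mu f) = \psi - (f - \mu f)$, i.e. $L\psi = (\P - I)\psi = \mu f - f$. For uniqueness, let $\psi_1,\psi_2$ both solve and put $h = \psi_1 - \psi_2$, so $\P h = h$ and hence $\P^n h = h$ for all $n$; then for any $x,y$, $|h(x) - h(y)| = \big|(\delta_x\P^n - \delta_y\P^n)h\big| \le 2\,|h|_\infty\,\|\delta_x\P^n - \delta_y\P^n\|_{TV} \le 2(1-a)^n |h|_\infty \to 0$, so $h$ is constant. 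Thus the solution is unique up to an additive constant, and the series above is the distinguished representative: $\mu\P^k(f - \mu f) = 0$ for every $k$ forces $\mu\psi = 0$, and it is this representative that obeys the quoted bound.

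The only step needing a moment's care — and the one I would state cleanly rather than bury — is the recentering that produces $|f|_*$ in place of the coarser $|f|_\infty$: it uses nothing beyond $\P^k\one = \one$ and $\mu\one = 1$, which let $f$ be shifted by an arbitrary constant inside each term of the series without altering $\psi$. The remaining ingredients, namely interchanging $\P$ with the infinite sum and the fact that bounded $\P$-harmonic functions are constant, are routine once Theorem~\ref{thm:basicE} is in hand, so I do not expect any genuine obstacle here.
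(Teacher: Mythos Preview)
Your argument is correct and follows essentially the same route as the paper: build $\psi$ as the resolvent series $\sum_{k\ge 0}\P^k(f-\mu f)$, use the geometric decay from Theorem~\ref{thm:basicE} to get convergence and the bound $2|f|_*/a$, and exploit that shifting $f$ by a constant leaves the series unchanged to upgrade $|f|_\infty$ to $|f|_*$. Your treatment is in fact more complete than the paper's on one point: the paper asserts uniqueness but never proves it, whereas you correctly observe that solutions are unique only modulo additive constants and single out the series as the representative with $\mu\psi=0$ for which the sup-norm bound holds.
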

\begin{proof}[Proof of Lemma \ref{l:goodPoisson} ]
Define   
\begin{align*}
  \psi_n =\sum_{k=0}^{n} \P^k(f- \mu f ) 
\end{align*}
We will see that desired $\psi$ will be the limit of the 
$\psi_n$. Since the definition of $\psi$ does not change if $f$ is 
replaced by $f+\lambda$ for any constant $\lambda$, we are free to 
assume that $f$ is such that $|f|_\infty=|f|_*$. 

For any integers $n>m>0$ we have 
\begin{align*}
  |\psi_n - \psi_m|_\infty &\leq \sum_{k=m+1}^n |\P^k f - \mu f|_\infty 
  \leq 2 |f|_\infty \sum_{k=m+1}^n (1-a)^k \\&=2 (1-a)^{m+1}|f|_\infty \frac{1-(1-a)^{n-m}}{a}
\end{align*}
Hence $\psi_n$ is a Cauchy sequence  and the limit exists which we 
will call $\psi$. Now observe that 
\begin{align*}
  |\psi|_\infty \leq \sum_{k=0}^\infty |P^kf - \mu f|_\infty \leq 2 |f|_\infty 
  \sum_{k=0}^\infty (1-a)^k \leq \frac{2 |f|_\infty}a 
\end{align*}
and hence $\lim L\psi_n = L \psi$ since $L$ is a bounded operator. 
%
Now observe that since $L \mu f =0$
\begin{align*}
  L \psi_n = \sum_{k=0}^{n} (\P^{k+1}f - \P^kf) = \P^{n+1}f - f 
  \longrightarrow \mu f - f \quad\text{as}\quad n \rightarrow \infty \,. 
\end{align*}Recalling that  $|f|_\infty=|f|_*$, the proof is concluded. 
\end{proof}

Now 
\begin{align}
  \psi(X_n) - \psi(X_0) &= \sum_{k=1}^n\big( \psi(X_k) - \psi(X_{k-1}) 
  \big) =  \sum_{k=1}^n\E \big( \psi(X_k) - \psi(X_{k-1})\big|
  \mathcal{F}_{k-1} ) + M_n\notag \\
 &= \sum_{k=0}^{n-1} (L \psi)(X_k) + M_n\label{eq:averages}
\end{align}
where $\mathcal F_k$ is the $\sigma$-algebra generated by the random variables 
$(X_0,\dots,X_k)$ and $M_n$ defined by 
\begin{align*}
  M_n = \sum_{k=1}^n  \psi(X_k)  - \E\big( \psi(X_k) \big| \mathcal{F}_{k-1}\big) 
\end{align*}
is a Martingale with respect to $\mathcal{F}_n$. Now since $L\psi(X_k) 
= \mu f-  f(X_k)$ rearranging the above expression and 
dividing by $n$ produces 
\begin{align*}
  \mu f - \frac1n \sum_{k=0}^{n-1} f(X_k)  = \frac{\psi(X_{n}) -
  \psi(X_0)}n - \frac{M_n}n 
\end{align*}
and we have
\begin{align*}
 \E M_n^2  = \sum_{k=1}^n \E \Big( \psi(X_k)  - \E\big( \psi(X_k) 
  \big| \mathcal{F}_{k-1})\Big)^2.
\end{align*}

Letting $\mathcal A_{k-1} = \{Y: Y \text{ is } \mathcal F_{k-1}-\text{measurable }, \mathbf E Y^2 < \infty\}$, observe that
\be
\E \Big( \psi(X_k)  - \E\big( \psi(X_k) 
  \big| \mathcal{F}_{k-1})\Big)^2 = \inf_{Y \in \mathcal A_{k-1}} \E \Big( \psi(X_k)  - Y \Big)^2 \le \E \psi(X_k)^2
\ee
since the constant random variable $Y \equiv 0$ is in $\mathcal A_k$ for all $k$, giving
\be
 \E M_n^2  = \sum_{k=1}^n \E \Big( \psi(X_k)  - \E\big( \psi(X_k) 
  \big| \mathcal{F}_{k-1})\Big)^2 \le \sum_{k=1}^n \E \psi(X_k)^2 \le \frac{4 |f|_\infty^2}{a^2}n.
\ee

So we have that 
\begin{align*}
 \E \Big(\mu f - \frac1n \sum_{k=0}^{n-1} f(X_k)   \Big)^2 \leq 2 \frac{\E 
  \big(\psi(X_{n}) -
  \psi(X_0)\big)^2}{n^2} + 2\frac{\E M_n^2}{n^2} \leq 
  \frac{4 |f|_\infty^2}{a^2n}\big( 2+\frac{8}{n}\big) 
\end{align*}
Again recalling that  $|f|_\infty=|f|_*$, we get the first 
quoted result. 

Now to see the second result, observe that $|M_{k+1}- M_k| \leq 
4|f|_\infty/a$. Hence Azuma's inequality  implies that \\
%
\begin{align*}
   \PP( |M_n| \geq \lambda \sqrt{n}|f|_\infty ) \leq 2\exp\big(- \frac{a^2 \lambda^2}{32}\big)\,. 
\end{align*} and returning to the above calculation 
\begin{align*}
  \PP \Big(\big|  \mu f - \frac1n \sum_{k=0}^{n-1} f(X_k) \big| \geq 
  \frac{4}{n a}|f|_\infty + \frac{\lambda }{\sqrt{n}}   |f|_\infty \Big) 
  \leq 2\exp\big(- \frac{a^2 \lambda^2}{32}\big) 
\end{align*}

\begin{remark}[Closeness by Perturbation  Estimate]\label{rem: Closeness}
 We now briefly
  give a version of the closeness result starting from
  Assumption~\ref{a:doeblin}  and Assumption~\ref{a:localApprox}
  following the proof above.

 Let $\psi$ the solution to the Poisson equation \eqref{eq:possion}
introduced previously. We begin by considering the analog of \eqref{eq:averages}
for the epsilon chain. 
\begin{align*}
  \psi(X_n^\epsilon) - \psi(X_0^\epsilon) 
  &= \sum_{k=0}^{n-1} (L_\epsilon  \psi)(X_k^\epsilon) + M_n^\epsilon= \sum_{k=0}^{n-1}    (L  \psi)(X_k^\epsilon) + R_n^\epsilon+ M_n^\epsilon 
\end{align*}
where $L_\epsilon=\P_\epsilon-I$ is the generator associated to 
$\P_\epsilon$ and $M_n^\epsilon$ and $R_n^\epsilon$ are defined by 
\begin{align*}
  M_n^\epsilon = \sum_{k=1}^n  \psi(X_k^\epsilon)  - \E\big(
  \psi(X_k^\epsilon) \big| \mathcal{F^\epsilon}_{k-1}\big)\,, \quad 
  R_n^\epsilon =  \sum_{k=0}^{n-1}
    \big((\P-\P_\epsilon)  \psi\big)\big(X_k^\epsilon\big) 
\end{align*}
where $\mathcal{F}_k^\epsilon$ is the $\sigma$-algebra generated by 
$(X_0^\epsilon,\dots,X_k^\epsilon)$. Notice that we have used the fact 
that $L-L_\epsilon = \P-\P_\epsilon$. Using \eqref{eq:possion}, we 
obtain 
\begin{align}\label{eq:poisson}
 \mu f - \frac1n \sum_{k=0}^{n-1}    f(X_k^\epsilon) = \frac{\psi(X_n^\epsilon) - \psi(X_0^\epsilon)}{n} +\frac1n 
  R_n^\epsilon + \frac1n M_n^\epsilon 
\end{align}
Since the right hand side does not change if $f$ is replaced by 
$f+\lambda$, we can assume $|f|_\infty=|f|_*$. 
By Assumption~\ref{a:localApprox} followed by Lemma~\ref{l:goodPoisson}, 
we see that 
\begin{align*}
  \frac1n |R_n^\epsilon| \leq 2\epsilon |\psi|_\infty \leq \frac{4\epsilon}{a}
  |f|_\infty, \quad\frac1{n^2} \E |M_n^\epsilon|^2 \leq \frac{4 |f|_\infty^2}{a^2n}, \quad |M_{k}^\epsilon- M_{k-1}^\epsilon|\leq \frac{4|f|_\infty}{a} 
\end{align*}

From this we can quickly obtain estimates which are the analogue of
those in Theorem~\ref{cor:invAvg}. First observe that taking expectations
  of \eqref{eq:poisson}  and using these estimates produces 
  \begin{align*}
     \mu f - \E \frac1n \sum_{k=0}^{n-1}    f(X_k^\epsilon) \leq \frac{4}{an} |f|_\infty
   +  \frac{4\epsilon}{a} |f|_\infty
  \end{align*}
Similarly we have,
\begin{align*}
  \E\Big(\mu f - \frac1n \sum_{k=0}^{n-1}    f(X_k^\epsilon)  \Big)^2 
  \leq 3\frac{4|\psi|_\infty^2}{n^2}+ 3 |R_n|^2 + 3 \E |M_n^\epsilon|^2 \leq 
  \frac{3}{a^2}( 16\epsilon^2 + \frac{16}{n^2}) 
  |f|_\infty^2 + \frac{12}{a^2n}|f|_\infty^2 
\end{align*}
and as before using  Azuma's inequality 
\begin{align*}
  \PP\Big(\big| \mu f - \frac1n \sum_{k=0}^{n-1}    f(X_k^\epsilon)  \big|
  \geq \frac{4}{a}( \epsilon + \frac{1}{n})|f|_\infty  + \frac{\lambda }{\sqrt{n}}|f|_\infty 
  \Big)\leq 2\exp\big( - \frac{ a^2\lambda^2}{32}\big) 
\end{align*}
 \end{remark}

\end{document}